\theoremstyle{definition}
\newtheorem{defn}{Definition}[section]
\theoremstyle{remark}
\theoremstyle{plain}
\newtheorem{conj}[defn]{Conjecture}
\newtheorem{lem}[defn]{Lemma}
\newtheorem{prop}[defn]{Proposition}
\newtheorem{thm}[defn]{Theorem}
\numberwithin{table}{section}
\newtheorem{rem}[defn]{Remark}
\newtheorem*{theorem*}{Theorem}
\newtheorem*{proposition*}{Proposition}
\numberwithin{equation}{section}
\numberwithin{figure}{section}
\newcommand{\set}[1]{\left\{#1\right\}}
\newcommand{\round}[1]{{\ooalign{\hfil\raise .10ex\hbox{\scriptsize#1}\hfil\crcr\mathhexbox20D}}}
\newcommand{\hex}{K}
\newcommand{\alb}{\mathsf{X}}
\newcommand{\sss}{\mathcal{L}}
\newcommand{\shift}{\sigma}
\newcommand{\face}{\mathsf{G}}
\newcommand{\edge}{\mathsf{E}}
\newcommand{\shiftspace}{\Sigma}
\newcommand{\leftshift}{\sigma}
\long\def\red#1{\textcolor[rgb]{1.00,0.00,0.00}{#1}}
\long\def\red#1{}
\def\s-s{self-similar}
\title{Random walks on barycentric subdivisions and the Strichartz hexacarpet}
\thanks{Research supported in part by NSF grant DMS-0505622}
\author[M.~Begue]{Matthew Begue}
\address[M.~Begue]{Department of Mathematics, University of Connecticut, and Department of Mathematics, University of Maryland, College Park, MD 20742-4015,
USA}
\email[M.~Begue]{matthew.begue@uconn.edu}
\author[D.~J.~Kelleher]{Daniel J.~Kelleher}
\email[D.~J.~Kelleher]{kelleher@math.uconn.edu}
\urladdr{\url{http://www.math.uconn.edu/~kelleher/}}
\author[A.~Nelson]{Aaron Nelson}
\email[A.~Nelson]{aaron.nelson@uconn.edu}
\author[H.~Panzo]{Hugo Panzo}
\email[H.~Panzo]{panzo@math.uconn.edu}
\urladdr{\url{http://www.math.uconn.edu/~panzo/}}
\author[R.~Pellico]{Ryan Pellico}
\email[R.~Pellico]{ryan.pellico@uconn.edu}
\urladdr{\url{http://www.math.uconn.edu/~pellico/}}
\author[A.~Teplyaev]{Alexander Teplyaev}
\email[A.~Teplyaev]{teplyaev@uconn.edu}
\urladdr{\url{http://www.math.uconn.edu/~teplyaev/}}
\address{Department of Mathematics, University of Connecticut, Storrs, CT 06269, USA}
\begin{document}
%
%

\begin{abstract}
We investigate simple random walks on graphs generated by 
repeated  barycentric subdivisions of a triangle. We use these random walks to study the diffusion on the self-similar fractal, the Strichartz hexacarpet, which is generated as the limit space of these graphs. 
We make this connection rigorous by establishing a graph  
isomorphism between the hexacarpet approximations and graphs produced by repeated Barycentric subdivisions of the triangle. This includes a discussion of various numerical calculations performed on the these graphs, and their implications to the diffusion on the limiting space. 
In particular, we prove that   equilateral barycentric subdivisions --- a metric space generated by replacing the metric on each 2-simplex of the subdivided triangle with that of a scaled Euclidean equilateral triangle --- converge to a self-similar 
geodesic metric space of dimension $\log(6)/\log(2)$, or about~2.58. 
Our numerical experiments give evidence to a conjecture that   the simple random walks 
on  the equilateral barycentric subdivisions 
converge to 
a continuous diffusion process on the  Strichartz hexacarpet corresponding to  
a different spectral dimension (estimated numerically to be about 1.74). 

\tableofcontents
\end{abstract}\maketitle

\section{Introduction and main conjectures\red{\ (Dan, Sasha et al.)}}

The goal of this paper is to investigate the relation between  simple random walks on  
repeated  barycentric subdivisions of 
a triangle  and the self-similar fractal Strichartz hexacarpet. 
We  explore a graph approximation to the hexacarpet in order to establish a graph  
isomorphism between the hexacarpet and Barycentric subdivisions of the triangle. 
After that we  discuss various numerical calculations performed on the approximating graphs. 
We prove that  the equilateral barycentric subdivisions converge to a self-similar geodesic metric space of dimension $log(6)/log(2)\approx2.58$ but, at the same time, our mathematical experiments give evidence to a conjecture that   the simple random walks converge to 
a continuous diffusion process on the  Strichartz hexacarpet corresponding to the spectral dimension $\approx1.74$. 

In Section \ref{sec:bcs} we develop the framework and basic results pertaining to barycentric subdivision. This is a standard object, intrinsic to the study of simplicial complexes, see \cite{Hat02} (and such classics as \cite{Lefschetz,Pontryagin,Spanier}). We define a metric on the set of edges of $n$th-iterated barycentric subdivision of a $2$-simplex, and use this to define a new limiting self-similar metric on the standard  Euclidean equilateral triangle.

In Section \ref{sec:introduction}, we turn to the theory of self-similar structures, as developed in \cite{kig01}. Using this theory we introduce a fractal structure, which we call the Strichartz hexacarpet, or hexacarpet for short. The hexacarpet is not isometrically embeddable into two dimensional Euclidean space, but otherwise resembles other self-similar infinitely ramified fractals with Cantor-set boundaries, such as the octacarpet (which is sometimes referred to as the octagasket, see \cite{outer} and references therein), 
the Laakso spaces (see \cite{RomeoSteinhurst,Steinhurst} and references therein), 
and 
the standard and generalized Sierpinski carpets (see \cite{BBKT} and references therein). 

We draw a connection between the hexacarpet and barycentric subdivisions in Section \ref{sec:graphiso}, in which we prove that the approximating graphs to the hexacarpet are isomorphic to the graphs created by barycentric subdivisions (where the 2-simplexes of the $n$-times subdivided triangle become graph vertices, connected by a graph edge of the simplexes that share a common face).

Section \ref{sec:graph_distances} discusses properties of the approximating graphs of the hexacarpet to contrast and illuminate connections between the hexacarpet and the limiting structure on the triangle defined in Section \ref{sec:bcs}. In particular, we examine the growth properties of the graph distance metric. We prove a proposition which heuristically places the diameter (in the sense of the usual graph distance) of the $n$th level graph as somewhere between $O(2^n)$ and $O(n2^n)$. Our numerical analysis supports a conjecture for the formulas of the diameter and radius of these graphs.

Finally in Section \ref{sec:numerics} we briefly describe numerical analysis of the spectral properties of the approximating graphs to the hexacarpet. Primarily we calculate eigenvalues of the Laplacian matrix for the first 8 levels of the approximating graphs. This allows us to approximate the resistance scaling factor of the hexacarpet, and suggests that there is a limit resistance. We also  plot approximations to the hexacarpet in 2- and 3-dimensional eigenfunction coordinates. 

 The experimental results in Section~\ref{sec:numerics} strongly suggest that the 
simple random walks on the barycentric subdivisions converge to a diffusion process on $K$ (most  efficiently this can be shown by analyzing harmonic functions and  eigenvalues and eigenfunctions of the Laplacian).
Thus, our theoretical and numerical results support the following conjecture, which is  explanationed in  Section~\ref{sec:numerics}. 

\begin{conj}\label{conj-1}We   conjecture that 
\begin{enumerate}
\item on the Strichartz hexacarpet there exists a unique self-similar local regular conservative Dirichlet form $\mathcal E$ with resistance scaling factor $\rho\approx$1.304 and the Laplacian scaling factor $\tau=6\rho$;   this form is a resistance form in the sense of Kigami. 
\item the simple random walks on the repeated  barycentric subdivisions of 
a triangle, with the time renormalized by $\tau^n$, converge to the  diffusion process, which is the continuous symmetric strong Markov process corresponding to the Dirichlet form $\mathcal E$;
\item this diffusion process satisfies the sub-Gaussian heat kernel estimates and   elliptic and parabolic Harnack inequalities, possibly with logarithmic corrections, corresponding to the Hausdorff dimension $\dfrac{log(6)}{log(2)}\approx2.58$ and the spectral dimension $2\dfrac{log(6)}{log(\tau)}\approx1.74$;  
\item the spectrum of the Laplacian  has spectral gaps in the sense of Strichartz;
\item the spectral zeta function has a meromorphic continuation to $\mathbb C$. 
\end{enumerate}
\end{conj}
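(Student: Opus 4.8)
The final statement is a conjecture, so rather than a proof I outline the research program by which each part could in principle be established, and indicate where the essential difficulties lie. Since the hexacarpet is infinitely ramified --- its cells meet along nontrivial boundary arcs rather than at finitely many points --- the \pcf{} decimation machinery of Kigami does not apply, and the natural model for the whole program is the Barlow--Bass construction of Brownian motion on the Sierpi\'nski carpet. The starting data are exactly the objects already in hand: by the graph isomorphism of Section~\ref{sec:graphiso}, the level-$n$ approximating graph is the barycentric subdivision graph, carrying a natural discrete energy form $\mathcal E_n$, and the metric convergence of Section~\ref{sec:bcs} identifies the limit space $K$ together with its self-similar geodesic metric and volume measure (with volume scaling $6$ and length scaling $2$).

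The heart of the program is to control the effective resistance $R_n$ between opposite ``sides'' of the level-$n$ graph and to show $R_{n+1}/R_n \to \rho$. One would prove sub- and super-multiplicative inequalities $R_{n+m} \le C R_n R_m$ and $R_{n+m} \ge c R_n R_m$ by the standard flow and cutting-and-shorting arguments, exploiting the $\mathfrak S_3$ dihedral symmetry of the subdivided triangle to reduce to a fixed finite set of boundary configurations. Together with a uniform elliptic Harnack inequality --- proved by a corner-to-corner chaining or coupling argument as in Barlow--Bass --- these estimates give two-sided bounds $R_n \asymp \rho^n$ and fix the Laplacian scaling $\tau = 6\rho$. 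Passing to a subsequential limit of $\tau^n \mathcal E_n$ then yields a self-similar local regular conservative Dirichlet form $\mathcal E$, and the resistance estimate shows it is a resistance form in the sense of Kigami; this establishes the existence asserted in part~(1).

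With volume doubling (immediate from the $6^n$ cells of comparable size), the elliptic Harnack inequality, and the resistance estimate in place, the Grigor'yan--Telcs and Barlow--Bass--Kumagai equivalences convert these ingredients into the sub-Gaussian heat kernel estimate of part~(3), with walk dimension $d_w = \log\tau/\log 2$ and spectral dimension $d_s = 2\log 6/\log\tau \approx 1.74$; the accompanying parabolic and elliptic Harnack inequalities come out of the same package. Uniqueness of $\mathcal E$ among symmetric self-similar forms --- completing part~(1) --- would follow from the Barlow--Bass--Kumagai--Teplyaev uniqueness theorem, whose hypotheses (a symmetry group acting transitively enough on cells, together with the Harnack inequality) are exactly what the resistance step supplies. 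Part~(2) then follows: uniqueness of the limit form upgrades the subsequential convergence of $\tau^n\mathcal E_n$ to Mosco convergence of the full sequence, which with tightness of the time-renormalized walks (a consequence of the heat kernel estimate) yields convergence of the time-changed simple random walks to the diffusion associated with $\mathcal E$.

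Parts~(4) and~(5) are the most fragile. Spectral gaps in the sense of Strichartz and the meromorphic continuation of the spectral zeta function are usually obtained on \pcf{} fractals via exact spectral decimation and the ensuing renewal equation for the eigenvalue counting function; here infinite ramification blocks an exact decimation, so one would instead attempt a renewal-theoretic analysis of the counting function built from the self-similar cell decomposition, extracting a possibly log-periodic leading asymptotic and continuing the associated Dirichlet series into $\mathbb C$. I expect the main obstacle of the entire program to be the resistance step: without a rigorous derivation of the sub- and super-multiplicativity constants there is no proof that $R_{n+1}/R_n$ converges at all --- the numerics only suggest $\rho \approx 1.304$ --- and it is precisely the absence of finite ramification that makes these estimates hard, since every bound must be obtained analytically rather than by solving a finite electrical network exactly. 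Accordingly, I expect parts~(4) and~(5) to remain open even after~(1)--(3) are secured, pending a better understanding of whether the hexacarpet admits any form of approximate spectral self-similarity.
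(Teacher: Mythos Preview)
The statement is a conjecture, and the paper does not attempt to prove it; Section~\ref{sec:numerics} offers only numerical evidence and heuristic pointers to the literature. Specifically, the paper justifies parts~(1) and~(2) by observing that the ratios $\lambda_j^n/\lambda_j^{n+1}$ appear to stabilize near $6\rho$ with $\rho\approx 1.3064$ (Table~\ref{ctable}); part~(3) by noting the reflection symmetries of the hexacarpet make the methods of \cite{BBK,BBKT} plausible; part~(4) by the visible flat intervals in the eigenvalue counting function (Figure~\ref{eigcount}); and part~(5) by citing \cite{ST}. Your write-up is therefore not in competition with a proof in the paper --- you have correctly recognized the statement as open and supplied a far more detailed and structured research program than the paper itself sketches.

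Your program is broadly the right one and matches what the paper gestures at: the Barlow--Bass resistance-and-Harnack route, followed by the Grigor'yan--Telcs/Barlow--Bass--Kumagai equivalences and the BBKT uniqueness machinery. Two small corrections are worth making. First, the cells of the hexacarpet do not meet along arcs: Proposition~\ref{cantor ramification} shows that $\sigma_i(K)\cap\sigma_j(K)$ is either empty or homeomorphic to a Cantor set, so the fractal is infinitely ramified with totally disconnected (not one-dimensional) cell boundaries. This does not change your conclusion that \pcf{} decimation fails, but it does affect how one would set up the ``opposite sides'' resistance problem and the boundary Harnack step. Second, the paper singles out a feature your outline passes over: the approximating graphs have diameter of order $n\cdot 2^n$ rather than $2^n$ (Proposition~\ref{prop-5-2} and Conjecture~\ref{conj-5.4}), and this is precisely why the authors anticipate \emph{logarithmic corrections} in the heat kernel estimates of part~(3). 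Any rigorous attack on part~(3) will have to confront this anomalous diameter growth, which is not present in the standard Sierpi\'nski carpet and may prevent a clean application of the existing equivalence theorems.
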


We note that our data on spectral dimension is not inconsistent with the (random) geometry of the numerical approximations used in the theory of  quantum gravity, according to the work of Ambj\o rn, Jurkiewicz, and Loll (see \cite{AJL}) at small time asymptotics: $d_S=1.8\pm0.25$. This reference as well as \cite{Reuter} use triangulations similar to those in our study to approximate quantum gravity. Therefore one can conclude that, with the present state of  numerical experiments, fractal carpets may represent a plausible (although simplified) model of sample geometries for the quantum gravity. 

\subsection*{Acknowledgements} We are grateful to Michael Hinz and an anonymous referee for many helpful comments.

\section{Equilateral barycentric subdivisions and their limit\red{\ (Sasha, Aaron, Ryan et al.)}}\label{sec:bcs}

The process of subdividing the $2$-simplex using its barycentric coordinates is useful in order to establish an isomorphism of graphs in later sections.  Information on barycentric subdivision of more general $n$-simplexes can be found in \cite{Hat02}.  We adapt Hatcher's notation slightly, which is outlined in the following definitions.

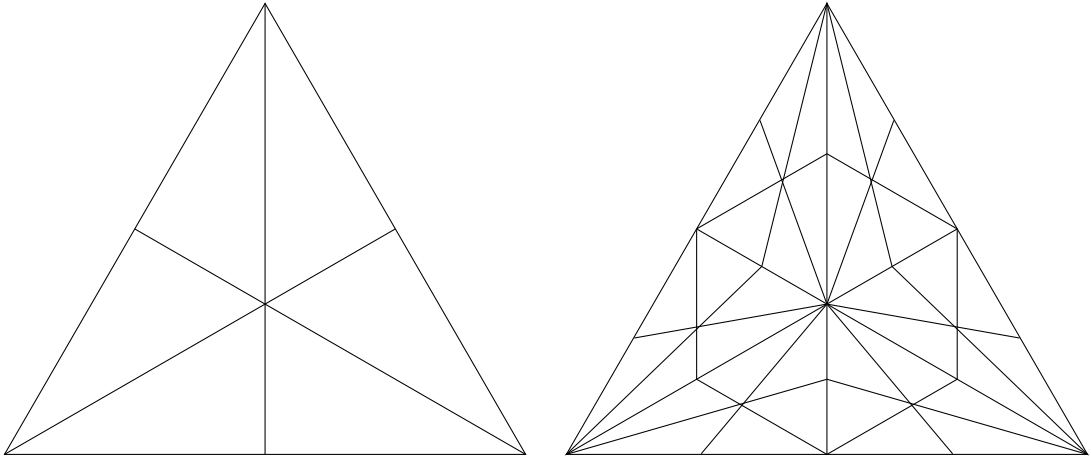
\begin{figure}[t]
\begin{tikzpicture}[scale=1]
\draw (90:4cm)--(210:4cm)--(330:4cm) -- cycle;

\foreach \a in {0,1,2}{
\draw ($(120*\a+90:4cm)$) -- ($(120*\a-90:2cm)$);
}
%
\end{tikzpicture}
\quad
\begin{tikzpicture}[scale=1]
\draw
(90:4cm)--(210:4cm)--(330:4cm) -- cycle;

\foreach \a in {0,1,2}{
\draw
($(120*\a+90:4cm)$) -- ($(120*\a-90:2cm)$);
}

\draw (30:1cm)--(90:4cm)--(150:1cm)--(210:4cm)--(270:1cm)--(330:4cm) -- cycle;
\draw (-30:2cm)--(-90:2cm)--(-150:2cm)--(-210:2cm)--(-270:2cm)--(-330:2cm) -- cycle;

\foreach \a in{0,2,4}{
\draw (0,0)--($(60*\a-10:2.6cm)$);
}

\foreach \a in{1,3,5}{
\draw (0,0)--($(60*\a+10:2.6cm)$);
}
\end{tikzpicture}
\caption{Barycentric subdivision}
\end{figure}

Consider any $2$-simplex (triangle) $T_0$ in the plane, defined by the vertices $[v_0,v_1,v_2]$ which do not all lie on a common line.  The sides of $T_0$ are the $1$-simplexes: $[v_0,v_1], [v_0,v_2], [v_1,v_2]$.

\begin{defn}\label{defn-2-1}
We perform \emph{barycentric subdivision (BCS)} on $T$ as follows:  First, we append the vertex set with the barycenters of the $1$-simplexes $[v_0,v_1], [v_0,v_2], [v_1,v_2]$ and label them $b_{01}, b_{02}, b_{12}$, respectively.  Also append the barycenter of $T_0$ which is the point in the plane given by $\frac{1}{3}(v_0+v_1+v_2)$, which we denote $b$. Thus, $b_{ij}$ is the midpoint of the segment $[v_i, v_j]$.  Any $2$-simplex in the collection of $2$-simplexes formed by the set $N=\set{v_0, v_1, v_2, b_{01}, b_{02}, b_{12}, b}$ is said to be \emph{minimal} if its edges contain no points in $N$ other than its three vertices.  Let $B(T_0)$ denote this collection of minimal $2$-simplexes.  Note that these six triangles are of the form $[v_i, b_{ij}, b]$ where $i\neq j\in\set{1,2,3}.$

We define the process of performing \emph{repeated barycentric subdivision} on $T_0$ as follows:  For a collection $C$ of 2-simplexes, we define $B(C)=\bigcup_{c\in C}B(c)$ to be the collection of minimal $2$-simplexes obtained by performing BCS on each element of $C$.  In this way we define the $n^{th}$ level barycentric subdivision of $T_0$ inductively by $B^n(T_0)=B(B^{n-1}(T_0))$.
\end{defn}

\begin{defn}\label{defn-2-2}
We call the elements of $B^n(T_0)$ the \emph{level n offspring} of $T_0$ where $T_0$ is the \emph{level n ancestor} of its $6^n$ offspring in $B^n(T_0)$.  Similarly, for any triangle $T$ obtained from repeated BCS of $T_0$, we may consider the \emph{level n offspring of T} to be the collection $B^n(T)$.  We use the terms \emph{child}, (resp. \emph{grandchild}) to denote the level $1$ (resp. level $2$) offspring of $T$.  Likewise, we use the terms \emph{parent}, (resp. \emph{grandparent}) to denote the level $1$ (resp. level $2$) ancestor of $T$.  We will use $t\subset T$ to denote that $t$ is a child of $T$, and when necessary $t\subset T \subset T^\prime$ to denote that $t$ is a child of $T$ and a grandchild of $T^\prime$.  If $s$ and $t$ are both children of $T$, then we say that $s$ and $t$ are \emph{siblings}.
\end{defn}

\begin{defn}\label{defn-2-3}
For any triangle $T = [a,b,c]$ we define the \emph{boundary} of $T$ to be the union of its sides, which we denote $\partial T=[a,b]\cup[b,c]\cup[a,c]$.  A level $k$ offspring $t$ of $T$ is said to be a \emph{boundary triangle for T} or \emph{on the boundary of T} if a side of $t$ lies on $\partial T$.  For a given triangle $T$, we say that a level $k$ offspring of $T$ is \emph{special with respect to T} if it is on the boundary of $T$ and contains a vertex of $T$. Note that all 1st level offspring are special with respect to $T$, and that every special offspring has exactly on special offspring.
\end{defn}

\begin{defn}\label{defn-2-4}
We say that two level $n$ triangles are \emph{adjacent} if they share a side.  Given a level $n$ triangle $T=[v_0, v_1, v_2]$, we know the children of $T$ are of the form $[v_i, b_{ij}, b]$ where $i\neq j\in\set{1, 2, 3}$.  We say that two children of $T$ are \emph{vertex adjacent} if their common side is a segment connecting the barycenter of $T$ to one of the original vertices of $T$ ($[v_i,b]$ for some $i\in\set{1, 2, 3}$) and that two children of $T$ are \emph{side adjacent} if their common side is a segment connecting the barycenter of $T$ to one of the barycenters of the sides of $T$ ($[b_{ij},b]$ for some $i\neq j\in\set{1, 2, 3}$).
\end{defn}

Note that each application of BCS on any triangle $T$ produces six new offspring, so we have $|B(C)|=6\cdot |C|$ for any collection of $2$-simplexes $C$.  Similarly, starting with $T_0$ we have inductively that $|B^n(T_0)|=6^n$.  Thus, there are $6^n$ level $n$ offspring of $T_0$.  Also, we see that each triangle in $B^n(T_0)$ is adjacent to at most three other triangles in $B^n(T_0)$ and that if $t\in B^n(T)$ is not on the boundary of $T$, then $t$ is adjacent to exactly three other members of $B^n(T)$.  On the other hand, if $t\in B^n(T)$ is on the boundary of $T$, then $t$ is adjacent to exactly two other members of $B^n(T)$, namely its vertex adjacent sibling and side adjacent sibling, and possibly one other triangle not in $B^n(T)$ but adjacent to $T$.

\begin{prop}\label{prop-2-5}
The following are immediate consequences of the above definitions.
\begin{enumerate}
\item The number of level $n$ boundary triangles of $T_0$ is $6\cdot 2^{n-1}$.
\item The number of adjacencies among $B^n(T_0)$ is $\frac{1}{2}[2\cdot(6\cdot2^{n-1})+3\cdot(6^n-6\cdot2^{n-1})]=2^{n-1}\cdot (3^{n+1}-3)$.
\item If $s\subset S$ and $t\subset T$ are adjacent, then either $S=T$ or $S$ is adjacent to $T$.
\item If $t\subset T$, then exactly one child of $t$ is special with respect to $T$.
\end{enumerate}
\end{prop}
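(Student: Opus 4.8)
The plan is to handle the four parts separately: (1), (2), and (4) reduce to counting and elementary casework, while (3) is the only assertion that genuinely uses the simplicial structure. For (1) I would induct on $n$, tracking the boundary $\partial T_0$ itself. Each BCS bisects every $1$-simplex lying on $\partial T_0$ by inserting its midpoint, so the number $S_n$ of boundary segments satisfies $S_n = 2S_{n-1}$ with $S_1 = 6$, giving $S_n = 6\cdot 2^{n-1}$. Since each minimal triangle $[v_i,b_{ij},b]$ contributes only the single side $[v_i,b_{ij}]$ to $\partial T_0$ (its other two sides contain the barycenter $b$ and lie in the interior), boundary triangles are in bijection with boundary segments, which proves (1). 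Assertion (2) then follows by the handshake lemma: using the degree facts recorded just before the proposition --- interior triangles of $B^n(T_0)$ have degree $3$, and boundary triangles degree $2$ (the ``possible'' external neighbor is absent because nothing lies outside $T_0$) --- the number of adjacencies is half the degree sum, $\tfrac12[2\cdot(6\cdot2^{n-1}) + 3\cdot(6^n - 6\cdot2^{n-1})]$, and a line of algebra reduces this to $2^{n-1}(3^{n+1}-3)$.

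For (4) I would work inside a single child $t=[v_i,b_{ij},b]$ of $T$. This $t$ meets $\partial T$ only along the segment $[v_i,b_{ij}]$ and contains exactly one vertex of $T$, namely $v_i$; hence any child of $t$ that is special with respect to $T$ must contain $v_i$. Exactly two children of $t$ contain $v_i$, one having a side along the boundary edge $[v_i,b_{ij}]$ and one having a side along the interior edge $[v_i,b]$. Only the former has a full side contained in $[v_i,b_{ij}]\subseteq\partial T$, so exactly one child of $t$ is special with respect to $T$.

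The main obstacle is (3), where the work is genuinely structural. The clean route is to use that repeated BCS produces a simplicial complex, so that any two distinct level-$(n-1)$ triangles meet in a common face --- the empty set, a single vertex, or a shared edge. Given adjacent $s\subset S$ and $t\subset T$ sharing a side $e$, I would first note $e\subseteq S\cap T$. If $S=T$ we are done, so suppose $S\neq T$. The edge $e$ cannot be an interior side of $S$, since each interior side of $S$ is shared by exactly two children of $S$, which would force $t\subset S$ and hence $T=S$; therefore $e\subseteq\partial S$, and symmetrically $e\subseteq\partial T$. Two distinct triangles of the complex whose boundaries overlap along a segment must share a full edge, so $S$ and $T$ are adjacent. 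The delicate point --- and my reason for invoking the simplicial-complex property rather than pushing through coordinates --- is excluding the degenerate overlaps (a single point, or edges meeting only partially), which the triangulation structure rules out automatically.
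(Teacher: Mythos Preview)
Your proposal is correct. The paper offers no proof at all---it simply declares the four items ``immediate consequences of the above definitions''---so your write-up supplies exactly the routine verifications the authors omit, and your organization (induction on boundary segments for (1), handshake for (2), structural simplicial argument for (3), casework inside a single child for (4)) is the natural way to flesh them out.
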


\begin{thm}\label{thm-2-6}
On the triangle $T_0$ there exists a unique geodesic distance $d_\infty(x, y)$ such that each
edge of each triangle in the subdivision $B_n(T_0)$ is a geodesic of length $2^{-n}$. The metric space
$(T_0, d_\infty(x, y))$ has the following properties:
\begin{enumerate}
	\item $(T_0, d_\infty(x, y))$ is a compact metric space homeomorphic to $T_0$ with the usual Euclidean
metric $|x - y|$;
	\item the distance $d_\infty(x, y)$ from any vertex of a triangle in $B_n(T_0)$ to any point on the opposite
side of that triangle is $2^{-n}$;
	\item there are infinitely many geodesics between any two distinct points;
	\item $(T_0, d_\infty(x, y))$ is a self-similar set build with $6$ contracting similitudes with contracting
ratios $\frac12$;
	\item The Hausdorff and self-similarity dimensions of $(T_0, d_\infty(x, y))$ are equal to $\frac{\log(6)}{\log(2)}$.
\end{enumerate}
	
\end{thm}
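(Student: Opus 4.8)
The plan is to realize $d_\infty$ as a decreasing limit of explicit graph metrics and then to verify the five listed properties in turn. For each $n$, let $K^{(n)}\subset T_0$ denote the union of all sides of the triangles in $B^n(T_0)$, regarded as a metric graph in which every such side is an arc of length $2^{-n}$, and let $d_n$ be the associated path (infimal-length) metric. Since barycentric subdivision refines each side of a level-$n$ triangle into two level-$(n+1)$ sides of half the length, one has the nesting $K^{(n)}\subset K^{(n+1)}$ together with arc-length consistency, so the sequence $d_n$ is nonincreasing on the common domain and $d_{n+1}\le d_n$ on the dense set $K^{(\infty)}=\bigcup_n K^{(n)}$. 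I would then set $d_\infty(x,y)=\lim_n d_n(x,y)=\inf_n d_n(x,y)$ on $K^{(\infty)}$ and extend by uniform continuity to the completion.

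The main obstacle is the matching lower bound: proving that refinement never shortens the two endpoints of a side below $2^{-n}$, so that $d_\infty$ is a genuine metric and each level-$n$ side survives as a geodesic of length exactly $2^{-n}$. Refinement does create genuine shortcuts for other pairs---for instance the midpoint of one side to the opposite vertex---so a blanket ``no shortcut'' statement is false and the argument must be confined to the right configurations. I would establish the needed bound by induction on the number of refinement levels using a crossing argument: any path joining two vertices of a level-$n$ triangle, or joining a vertex to its opposite side, must cross the interior cell walls, and minimizing the crossing cost over the shared edges (a computation that is self-similar from one level to the next, since each child carries a rescaled copy of the same wheel-shaped subdivision graph) returns exactly the value $2^{-n}$. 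This simultaneously yields property (2) and the positivity of $d_\infty$.

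With this key lemma in hand the remaining properties follow more routinely. For (1) I would check that the level-$n$ cells have $d_\infty$-diameters tending to zero and form a neighborhood basis compatible with the Euclidean one, so that the completion of $(K^{(\infty)},d_\infty)$ is realized as $T_0$ and the identity extends to a homeomorphism onto $(T_0,|x-y|)$, compactness being inherited in the limit. That $d_\infty$ is a geodesic distance then follows because $(T_0,d_\infty)$ is a complete, locally compact length space, whence the Hopf--Rinow--Cohn-Vossen theorem supplies geodesics. For (4), the six parent-to-child maps $F_1,\dots,F_6$ are similitudes of ratio $\tfrac12$ for $d_\infty$: under $F_i$ the level-$k$ subdivision of $T_0$ matches the level-$(k+1)$ subdivision inside the child, whose edges have length $2^{-(k+1)}=\tfrac12\,2^{-k}$, so $d_\infty(F_i x,F_i y)=\tfrac12 d_\infty(x,y)$, and the children have disjoint interiors, giving the open set condition. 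Property (5) is then the Moran--Hutchinson formula for $N=6$ maps of ratio $\tfrac12$ satisfying the open set condition, namely $\dim_H=\dim_S=\log 6/\log 2$.

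Finally, for (3) I would exploit that inside each subdivided triangle the wheel graph already offers two distinct geodesics between suitable vertices, the ``rim'' path through an edge-midpoint and the ``spoke'' path through the barycenter, both of length $2^{-n}$; given any two distinct points, their geodesic passes at some fine level through such a configuration, and rerouting there---independently at infinitely many scales---produces infinitely many distinct geodesics. Uniqueness of $d_\infty$ follows since any geodesic distance making every level-$n$ side a geodesic of length $2^{-n}$ must agree with the path-length metric on the dense skeleton $K^{(\infty)}$, hence everywhere by continuity.
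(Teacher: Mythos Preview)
Your proposal is correct and follows essentially the same route as the paper: both build $d_\infty$ as the decreasing limit of the path metrics $d_n$ on the 1-skeleta $\partial B_n(T_0)$, both hinge on an inductive crossing/compatibility lemma for the lower bound (the paper states it as ``$d_n(x,y)=d_{n+k}(x,y)$ for vertices'' with equally little detail), and both derive (1), (4), (5) from the resulting self-similarity in the same way. The only minor differences are presentational: the paper supplies an explicit upper-bound geodesic for (2) by threading through successive barycenters and summing $2^{-1}+\cdots+2^{-n}+2^{-n}=1$, whereas you absorb (2) into the crossing lemma; and you invoke Hopf--Rinow--Cohn-Vossen for the existence of geodesics, which the paper leaves implicit.
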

\begin{proof}
We choose a particular triangle  $T_0 = [v_0,v_1,v_2]$ and construct a metric $d_\infty$ on $T_0$ by approximating with metrics $d_n$ on  $\partial B_n(T_0)$, the union of edges of the triangles in $B_n(T_0)$. We have that $\partial B_n(T_0)$ is a $1$-simplicial
complex, a quantum graph, and a one dimensional manifold with junction points at the vertexes of $B_n(T_0)$. There is a unique
geodesic metric $d_n(x, y)$ on $\partial B_n(T_0)$ such that the length of each edge is $2^{-n}$. 
It is easy to see that for any $x, y \in  \partial B_n(T_0)$ and any $k > 0$ we have 
$d_n(x, y) \geqslant d_{n+k}(x, y)$.
Moreover, by induction one can show that, if $x, y$  vertices of  $B_n(T_0)$ and  $k > 0$, then we have the compatibility
condition
$d_n(x, y) = d_{n+k}(x, y)$.
Therefore on the union
$\cup_{n=0}^\infty \partial B_n(T_0)$ there is a unique geodesic metric $d_\infty(x, y)$ such that each
edge of each triangle in the subdivision $B_n(T_0)$ is a geodesic of length $2^{-n}$.

Since the diameter, in the Euclidean metric, of 2-simplexes of $r$-times repeated barycentric subdivided equilateral triangle is bounded by $(2/3)^r$, $\cup_{n=0}^\infty \partial B_n(T_0)$ is dense in $T_0$ with respect to the Euclidean topology (see Chapter 2, Section 1 of \cite{Hat02}). Thus $d_\infty$ can
be extended to $T_0$ by continuity.

Next we notice that, with the metric $d_\infty$, $T_0$ is self-similar in that if $T = [a_0,a_1,a_2]$ is a 2-simplex of $B_n(T_0)$, then the map of simplexes $f$ sending $v_i \to a_i$, and extending by linearity, is a homeomorphism with $d_\infty(x,y) = 2^{-n}d_\infty(f^{-1}(x),f^{-1}(y))$.

To prove (2), we note that it is enough to show that for, $x$ a corner of $T_0$, and $y$ a vertex of $B_n(T_0)$ on the side of $T_0$ opposite $x$, $d_\infty(x,y) = 1$. This is sufficient because of self-similarity and that such vertexes become dense in the side of $T_0$ which is opposite $x$. Any such $y$ can be connected to a barycenter of a 2-simplex of $B_{n-1}(T_0)$, call this barycenter $x_{n-1}$, by an edge of length $2^{-n}$. In turn, $x_{n-1}$ can be connected to $x_{n-2}$, a barycenter of a 2-simplex of $B_{n-2}(T_0)$ by an edge of length $2^{-n}$. In turn $x_{n-2}$ can be connected to a barycenter of a 2-simplex of $B_{n-3}(T_0)$, called $x_{n-3}$, by a path of length $2^{-n-1}$. Proceeding by induction we get that $x$ can be connected to $y$ by a path of length
\[
2^{-1}+2^{-2}+\cdots + 2^{-n+1} + 2^{-n}+2^{-n} = 1.
\]
This is precisely $d_\infty(x,y)$ because it can be shown that any path from $x$ to $y$ on $\partial B_n(T_0)$ has to pass through at least $2^n-1$ vertexes. This argument implies, in particular, that the diameter of $T_0$ in the   metric $d_\infty$ is equal to $1$, and so the metric is finite even when continued from $\cup_{n=0}^\infty \partial B_n(T_0)$ to $T_0$. 

The same kind of argument also proves  (3). If $y$ is a vertex of $B_n(T_0)$, then it is a vertex of $B_{n+k}(T_0)$ for all $k\geqslant 0$, and the collection of paths described above differ depending on our choice of $n$ and $k$. It is then easy to construct infinitely many geodesics with the same length between arbitrary distinct points of $T_0$. 

To prove (1), we use the 
 insight into the properties of $d_\infty$ described above. In particular, the $d_\infty$ ball of radius $2^{-n}$ centered at $x$, a vertex of $B_n(T_0)$, is contained in the union of all $2$-simplexes of $B_n(T_0)$, which contain $x$ as a vertex. The union of these triangles clearly contains a Euclidean open set containing $x$, and is contained in the Euclidean ball around $x$ of radius $(2/3)^n$. Since $\cup_{n=1}^\infty B_n(T_0)$ is dense in both metrics, this proves that the metrics are equivalent in the sense  that they induce the same topology. This proves (1), and  has the added bonus of proving that $T_0$ is compact with respect to $d_\infty$, and thus $d_\infty$ is complete.

The compactness of $T_0$ with respect to $d_\infty$ proves (4) because $T_0$ is the union of the $6$ contraction maps from $T_0$ onto each 2-simplex of $B_1(T_0)$, and so it is a self-similar structure (see the next section). Each of these maps has contraction ratio $1/2$, and thus using a calculation found in \cite{edg08}, one discovers that the self-similarity and Hausdorff dimension of $T_0$ is $\log(6)/\log(2)$, which proves (5).
\end{proof}
\section{Self-Similar structures and the Stricharz hexacarpet\red{\ (Dan et al.)}}\label{sec:introduction}

First we introduce some notation that we will use.  We denote $\alb = \set{0,1,\ldots, N-1}$, called an \emph{alphabet}, and
\[
\alb^n = \set{x_1x_2\cdots x_n~|~x_i\in\alb}
\]
will be the words of length $n$. Also, we take
\[
\alb^* = \bigcup_{n=0}^\infty \alb^n \quad \text{and,} \quad\shiftspace = \prod_{i=1}^\infty \alb.
\]
Naturally, the set $\alb$ has discrete topology, and $\shiftspace$ is given the product topology (i.e. the topology whose basis is sets of the form $\prod_{i=1}^\infty A_i$, such that $A_i = \alb$ for $i \geq M$ for some $M$).  There is even a natural metric on $\shiftspace$, defined below.

\begin{proposition*}
Fix a number $r\in(0,1)$. For $w = w_1w_2\cdots$ and $v= v_1v_2\cdots$ in $\shiftspace$, we define $\delta_r(w,v) = r^n$ where $n = \min\set{\ell:w_\ell\neq v_\ell}$ with the convention that $\delta_r(w,w)=0$. Then $\delta_r$ is a metric on $\shiftspace$. Additionally, the maps $\leftshift_i(w) = iw$, for $i\in\alb$, is a contraction with Lipschitz constant $r$.
\end{proposition*}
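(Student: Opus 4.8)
The plan is to verify the three metric axioms for $\delta_r$ and then compute the effect of $\leftshift_i$ directly. Positivity and the identity of indiscernibles are immediate from the definition: for $w\neq v$ the index $n=\min\set{\ell:w_\ell\neq v_\ell}$ is a well-defined positive integer, so $\delta_r(w,v)=r^n>0$ since $r\in(0,1)$, while $\delta_r(w,w)=0$ by convention; conversely $\delta_r(w,v)=0$ forces $w=v$ because $r^n\neq 0$ for every finite $n$. Symmetry is equally transparent, since $\set{\ell:w_\ell\neq v_\ell}=\set{\ell:v_\ell\neq w_\ell}$ and hence the two minima coincide.

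The only step requiring a genuine argument is the triangle inequality, and the cleanest route is to prove the stronger ultrametric inequality
\[
\delta_r(w,v)\leq \max\set{\delta_r(w,u),\delta_r(u,v)}
\]
for all $w,u,v\in\shiftspace$, from which the ordinary triangle inequality follows via $\max\set{a,b}\leq a+b$ for nonnegative $a,b$. To establish this I would fix $w\neq v$, set $n=\min\set{\ell:w_\ell\neq v_\ell}$, and argue by contradiction: if both $\delta_r(w,u)<r^n$ and $\delta_r(u,v)<r^n$, then, because $a\mapsto r^a$ is strictly decreasing, $w$ and $u$ agree on their first $n$ coordinates and so do $u$ and $v$; transitivity of equality then gives $w_n=u_n=v_n$, contradicting $w_n\neq v_n$. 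Hence at least one of the two distances is $\geq r^n=\delta_r(w,v)$, which is exactly the claimed inequality (the degenerate cases $w=v$, $w=u$, or $u=v$ being trivial).

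Finally, for the contraction claim I would track how prepending a letter shifts the first disagreement. If $w=v$ then $\leftshift_i(w)=\leftshift_i(v)$ and there is nothing to check. If $w\neq v$ with $n=\min\set{\ell:w_\ell\neq v_\ell}$, then $\leftshift_i(w)=iw$ and $\leftshift_i(v)=iv$ share the first coordinate $i$ and their first disagreement occurs one position later, at index $n+1$, so $\delta_r(\leftshift_i(w),\leftshift_i(v))=r^{n+1}=r\,\delta_r(w,v)$. Thus $\leftshift_i$ is in fact a similarity with ratio exactly $r$, and since $r<1$ it is a contraction with Lipschitz constant $r$. I expect no real obstacle here: the entire content is the ultrametric inequality, and once it is phrased as the transitivity-of-agreement argument above, every case reduces to elementary bookkeeping on the index of first disagreement.
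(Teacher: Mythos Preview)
Your proof is correct and complete. The paper, however, does not actually supply a proof of this proposition at all: it simply refers the reader to Theorem~1.2.2 of Kigami's \emph{Analysis on Fractals}. So your approach is genuinely different in that you give a self-contained elementary verification --- establishing the ultrametric inequality via the transitivity-of-agreement argument and then reading off the similarity ratio of $\leftshift_i$ --- whereas the paper treats the result as standard background and outsources it to the literature. What you gain is self-containment; what the citation buys is brevity and a pointer to the broader theory of self-similar structures in which this metric is a basic tool.
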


This is proven as Theorem 1.2.2 in \cite{kig01}. The work \cite{kig01} introduces the the theory of self-similar structures and is developed in the context of contractions on metric spaces. We also use the definition of self-similar structure set forth in the above paper. In the rest of this section, we shall take $\delta = \delta_{1/2}$, to be the metric that makes $\shift_i$ contractions with Lipschitz constants $1/2$.

\begin{prop}\label{prop-3-1}
Let $K$ be a compact metrizable space, let $\alb$ be a finite indexing set for $F_i:K\to K$ continuous injections such that $K = \cup_{i\in\alb}F_i(K)$. We call the triplet $\sss = (K,\alb,\set{F_i}_{i\in\alb})$  a \emph{self-similar structure on $K$} if there is a continuous surjection $\pi:\shiftspace\to K$ such that the relation $F_i\circ \pi = \pi\circ \sigma_i$, where $\leftshift_i(w)= iw$, holds for all $w\in\alb^*$.
\end{prop}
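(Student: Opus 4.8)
The first thing to recognize is that \ref{prop-3-1}, as worded, is a \emph{definition} in the grammatical guise of a proposition: the sentence ``we call the triplet $\sss = (K,\alb,\set{F_i}_{i\in\alb})$ a self-similar structure on $K$ if there is a continuous surjection $\pi$\ldots'' stipulates terminology by naming a defining condition, so the ``if'' is definitional and there is no implication to establish. Thus the plan is not to prove a theorem but to record the only things a definition can owe us, namely that every object it names is already meaningful and that the stipulated condition is internally consistent. Concretely, $\shiftspace = \prod_{i=1}^\infty \alb$ carries the product topology fixed above, the maps $\sigma_i(w) = iw$ send $\shiftspace$ into $\shiftspace$, and $K$ is compact metrizable with each $F_i$ continuous; hence $F_i \circ \pi$ and $\pi \circ \sigma_i$ are both continuous maps $\shiftspace \to K$ and the intertwining equality is a well-formed identity of such maps. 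Since each $F_i(K)$ is a continuous image of a compact set it is compact, hence closed, so $K = \bigcup_{i \in \alb} F_i(K)$ is a finite closed cover, with which the surjectivity $\pi(\shiftspace) = K$ is consistent.

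The one genuine, if elementary, point worth verifying is the clause ``holds for all $w \in \alb^*$.'' Read literally, the displayed relation $F_i \circ \pi = \pi \circ \sigma_i$ is indexed by a single letter $i \in \alb$, so I would interpret the quantifier over $w \in \alb^*$ through the word-level maps $F_w = F_{w_1} \circ \cdots \circ F_{w_n}$ and $\sigma_w(v) = wv$, and check by induction on $|w|$ that the single-letter relations generate $F_w \circ \pi = \pi \circ \sigma_w$ for every $w \in \alb^*$ (the empty word giving the identity, the inductive step being one application of a single-letter relation). This is the content that makes the $w$-quantifier meaningful, and it is the only nontrivial verification hidden in the statement. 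I would also flag the mismatch between the subscript $i$ in the displayed equation and the quantifier over $w$ as a typographical inconsistency to be reconciled, either by writing the relation with subscript $w$ or by restricting the quantifier to single letters $i \in \alb$.

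The substantive result that the surrounding development actually needs is \emph{not} contained in this definition: it is the existence and uniqueness of the coding map $\pi$, which holds once one adds the hypothesis that each $F_i$ is a contraction of a complete metric space, in which case $\pi(w) = \lim_{n\to\infty} F_{w_1} \circ \cdots \circ F_{w_n}(x_0)$ is independent of $x_0$, continuous, surjective, and the unique solution of the intertwining relation (Theorem 1.2.3 of \cite{kig01}). The main obstacle here is therefore interpretive rather than mathematical: the statement as phrased asserts nothing requiring proof, whereas the theorem one would want, the construction of $\pi$, carries a contraction hypothesis absent from \ref{prop-3-1}. My recommendation would be to relabel \ref{prop-3-1} as a \texttt{defn} and, if an existence claim is intended, to state it separately as a proposition with the contraction hypothesis made explicit and attributed to \cite{kig01}.
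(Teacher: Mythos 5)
Your reading is exactly right and matches the paper's own treatment: the paper gives no proof of Proposition~\ref{prop-3-1} at all, because it is Kigami's \emph{definition} of a self-similar structure stated in proposition form (the surrounding text explicitly says the definition is taken from \cite{kig01}), so there is nothing to prove beyond the well-definedness checks you record. Your flag of the $i$-versus-$w$ quantifier slip and your pointer to the contraction-based construction of $\pi$ in \cite{kig01} as the genuinely substantive (but separate) result are both consistent with the paper's intent.
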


We define the $n^{th}$ level \emph{cells} of $K$, $K_w=F_w(K)$ for $w=w_1w_2\cdots w_n\in\alb^n$, where $F_w = F_{w_1}\circ F_{w_2}\circ\cdots\circ F_{w_n}$.  In particular, if $K$ is some quotient space of $\shiftspace$, where $\pi$ is the quotient map where $\sigma_i$'s are constant on the fibres of $\pi$ for all $i\in \alb$, then we can define  $F_i = \pi\circ\shift_i\circ\pi^{-1}$,  creating a self-similar structure. It is in this way we shall define a fractal.

We define the equivalence relation $\sim$ by the following relations: Let $\alb = \set{0,1,\ldots,5}$ where $x$ is any element in $\alb^*$ and $v\in\set{0,5}^\omega$.  Suppose  $i,j\in \alb$ and $j = i+1 \mod 6$.  Then, if $i$ is odd,

\begin{align}\label{eqn:groupequivalence1}
xi3v \sim xj3v \quad\text{ and }\quad xi4v\sim xj4v.
\end{align}
If  $i$ is even ($j$ is still $i+1 \mod 6$), then
\begin{align}\label{eqn:groupequivalence2}
xi1v \sim xj1v \quad \text{ and } \quad xi2v \sim xj2v.
\end{align}
We define $\hex := \shiftspace/\sim$.

We may also define $\hex$ in an alternate way, which we shall call $\tilde{\hex}$. 
The equivalence relation on $\tilde{\hex}$ is defined by $$xiy \sim xjz$$ for $x\in\alb^*$, $i,j\in\alb$, and $z,y\in\shiftspace$, where $$j = i+1 \mod 6$$ and, if  
 $i$ is odd, then  
\begin{align}\label{eqn:fixedpointequivalence1}
y_k = i+2 \text{ or } i+3 \mod 6\quad \text{ and }\quad z_k = \begin{cases}
i-1 \mod 6 & \text{if } y_k = i+2\mod 6 \\
i-2 \mod 6 & \text{if } y_k = i+3 \mod 6.
\end{cases}
\end{align}
and if $i$ is even, then 
\begin{align}\label{eqn:fixedpointequivalence2}
y_k = i+1 \text{ or } i+2 \mod 6\quad \text{and }\quad z_k = \begin{cases}
i  & \text{if } y_k = i+1\mod 6 \\
i-1 \mod 6 & \text{if } y_k = i+2 \mod 6.
\end{cases}
\end{align}

\begin{prop}
The space $(\hex, \alb, \set{\shift_i})$ is a self-similar structure, where $\shift_i:\hex\to\hex$ is defined, if $\pi: \shiftspace\to\hex$ is the projection associated with the equivalence relation $\sim$, then $\shift_i(\pi(x)) = \pi(\shift_i(x))$. 
\end{prop}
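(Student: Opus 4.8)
The plan is to recognize $(\hex,\alb,\set{\shift_i})$ as a direct instance of the general construction recorded in the paragraph after Proposition~\ref{prop-3-1}: if $\pi\colon\shiftspace\to\hex$ is the quotient map of an equivalence relation on $\shiftspace$ on whose fibres each left-shift $\shift_i$ is constant, then the induced maps $\shift_i=\pi\circ\shift_i\circ\pi^{-1}$ assemble into a self-similar structure. Granting that, the whole burden reduces to the single compatibility condition
\[
x\sim y \ \Longrightarrow\ ix\sim iy\qquad(i\in\alb),
\]
which is exactly the statement that $\shift_i$ is constant on fibres, so that $\shift_i(\pi(x))=\pi(ix)$ is well defined. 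Everything else is then formal: $\pi$ is a continuous surjection by the definition of the quotient topology; each $\shift_i$ on $\hex$ is continuous because $\pi\circ\shift_i$ is continuous (the $\shift_i$ on $\shiftspace$ are contractions by the Proposition above) and factors through $\pi$ by the universal property of quotient maps; and $\hex=\bigcup_{i\in\alb}\shift_i(\hex)$ follows by applying $\pi$ to $\shiftspace=\bigcup_{i}\shift_i(\shiftspace)$ together with $\shift_i\circ\pi=\pi\circ\shift_i$.

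To verify the compatibility I would first reduce from the full relation $\sim$ to the generating relations \eqref{eqn:groupequivalence1}--\eqref{eqn:groupequivalence2}. Since $\sim$ is the equivalence relation generated by these, it suffices to show that prepending a letter sends each generating pair to an equivalent pair; reflexivity, symmetry, and transitivity are then automatically preserved, and a short induction on the length of a chain $x=w_0\sim w_1\sim\cdots\sim w_m=y$ of generating steps yields $ix\sim iy$. The key structural observation is that every generating relation has the shape $x\,a\,c\,v \sim x\,(a{+}1)\,c\,v$, with an arbitrary prefix $x\in\alb^*$, a single differing letter at position $|x|+1$, a fixed following letter $c$ ($c\in\set{3,4}$ when $a$ is odd, $c\in\set{1,2}$ when $a$ is even), and a common tail $v\in\set{0,5}^\omega$. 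Prepending $i\in\alb$ merely replaces the prefix $x$ by $ix$ while leaving the differing letter, the letter $c$, and the tail $v$ untouched, so the result is again an instance of the very same generating relation. This prefix-invariance is the structural heart of the self-similarity.

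The steps I expect to require the most care are the two hypotheses of the general construction that are easy to take for granted: that $\hex$ is compact and metrizable, and that each descended $\shift_i$ is genuinely injective. Compactness is immediate, as $\hex$ is a continuous image of the compact space $\shiftspace$; metrizability I would obtain by checking that $\sim$ is closed in $\shiftspace\times\shiftspace$ (using that $\set{0,5}^\omega$ is closed and that the identifications are controlled coordinatewise), so that the compact quotient is Hausdorff and hence metrizable by the standard theorem on quotients of compact metric spaces. Injectivity of $\shift_i$ is the converse implication $ix\sim iy\Rightarrow x\sim y$: one must show that a chain of generating steps joining two words both beginning with $i$ can be rerouted to avoid altering the leading letter. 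This forces one to track precisely when a generating relation acts with empty prefix --- the only way the first letter can change, and only through the special boundary words $c\,v$ with $v\in\set{0,5}^\omega$ --- and to confirm that such excursions are reversible after deleting the initial $i$. This bookkeeping, rather than the headline prefix-invariance, is where the genuine work lies.
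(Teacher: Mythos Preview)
Your approach is correct and hits the same core observation as the paper: the generating relations \eqref{eqn:groupequivalence1}--\eqref{eqn:groupequivalence2} are manifestly prefix-invariant, so $x\sim y\Rightarrow ix\sim iy$ and the maps $\shift_i$ descend. The difference lies in how you handle the remaining hypotheses.

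For metrizability you propose showing $\sim$ is closed in $\shiftspace\times\shiftspace$ and invoking the compact-Hausdorff route; the paper instead observes that each equivalence class of $\sim$ has at most two elements, so $\delta([x],[y])=\inf\{\delta_r(x',y'):x'\in[x],\,y'\in[y]\}$ is already a genuine metric (the triangle inequality is immediate when classes are finite of bounded size). This is quicker and avoids any point-set topology.

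That same observation also dissolves the part you flag as the genuine work, the injectivity $ix\sim iy\Rightarrow x\sim y$. Because classes have at most two elements, there are no nontrivial chains: if $ix\sim iy$ with $ix\neq iy$ then $\{ix,iy\}$ is itself one of the generating pairs. Since both words begin with $i$, the prefix in that generating relation is nonempty, and deleting its leading $i$ yields a generating relation witnessing $x\sim y$. So the ``excursion'' bookkeeping you anticipate never arises. Your argument is not wrong, just more general than needed; the paper's two-element-class remark is the shortcut you are missing.
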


\begin{proof}
Since $x\sim y$ implies that $\shift_i(x)\sim\shift_i(y)$ for all $x,y\in\shiftspace$ and $i\in\alb$ (this can be seen in the definition of $\sim$), $\shift_i:\hex \to \hex$ is well defined. The only thing left to check is that $\hex$ is metrizable. This follows because we can define a metric $\delta$ on $\hex$ by
\[
\delta([x],[y]) = \inf_{x\in[x],y\in[y]}\set{\delta_r(x,y)}.
\]
This metric is well defined because the equivalence classes of $\sim$ contain at most 2 elements.
\end{proof}

\begin{figure}[t]
\begin{tikzpicture}[scale=1.]
\foreach \a in {0,...,5}{
\node[regular polygon, regular polygon sides=6, minimum size=2.cm, draw] at ($(\a*60+30:2.8cm)$) {~};

\draw ($(\a*60+30:2.8cm)+(\a*60+180:1cm)$) --
      ($(\a*60+90:2.8cm)+(\a*60+300:1cm)$)
      ($(\a*60+30:2.8cm)+(\a*60+120:1cm)$) --
      ($(\a*60+90:2.8cm)+(\a*60:1cm)$)      ;

\foreach \b in {0,...,5}{
\draw[fill=white,thick] ($(\a*60+30:2.8cm)+(\b*60+\a*60:1cm)$) circle (2pt);
}
}

\foreach \a in {1,3,5}{
\foreach \b in {0,...,5}{
\node[regular polygon, regular polygon sides=6, minimum size=1.5cm] at ($(\a*60+30:2.8cm)+(-\b*60+\a*60:.7cm)$) {$\a\b$};
}}

\foreach \a in {0,...,5}{

\node at ($(\a*60+30:2.8cm)$) {$\a$};
}

\foreach \a in {0,2,4}{
\foreach \b in {0,...,5}{
\node[regular polygon, regular polygon sides=6, minimum size=1.5cm] at ($(\a*60+30:2.8cm)+(\b*60+\a*60+60:.7cm)$) {$\a\b$};
}}
\end{tikzpicture}
~
\begin{tikzpicture}[scale=1.]
\foreach \a in {0,...,5}{
\node[regular polygon, regular polygon sides=6, minimum size=2.cm, draw] at ($(\a*60+30:2.8cm)$) {~};

\draw ($(\a*60+30:2.8cm)+(\a*60+180:1cm)$) --
      ($(\a*60+90:2.8cm)+(\a*60+300:1cm)$)
      ($(\a*60+30:2.8cm)+(\a*60+120:1cm)$) --
      ($(\a*60+90:2.8cm)+(\a*60:1cm)$)      ;

\foreach \b in {0,...,5}{
\draw[fill=white,thick] ($(\a*60+30:2.8cm)+(\b*60+\a*60:1cm)$) circle (2pt);
}
}

\foreach \a in {0,2,4}{
\foreach \b in {0,...,5}{
\node[regular polygon, regular polygon sides=6, minimum size=1.5cm] at ($(\a*60+30:2.8cm)+(\b*60:.7cm)$) {$\a\b$};
}}

\foreach \a in {1,3,5}{
\foreach \b in {0,...,5}{
\node[regular polygon, regular polygon sides=6, minimum size=1.5cm] at ($(\a*60+30:2.8cm)-(\b*60-120:.7cm)$) {$\a\b$};
}}

\foreach \a in {0,...,5}{

\node at ($(\a*60+30:2.8cm)$) {$\a$};
}


\end{tikzpicture}
\caption{$\face_2$ generated by \eqref{eqn:groupequivalence1}, \eqref{eqn:groupequivalence2} on left, and \eqref{eqn:fixedpointequivalence1}, \eqref{eqn:fixedpointequivalence2} on right.}
\end{figure}

\begin{prop}
The equivalences defined in equations \eqref{eqn:groupequivalence1},\eqref{eqn:groupequivalence2} and equation \eqref{eqn:fixedpointequivalence1}, \eqref{eqn:fixedpointequivalence2} provide two definitions of $\hex$ which are equivalent. 
\end{prop}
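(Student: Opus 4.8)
Write $\sim_1$ for the relation generated by \eqref{eqn:groupequivalence1}--\eqref{eqn:groupequivalence2} and $\sim_2$ for the one generated by \eqref{eqn:fixedpointequivalence1}--\eqref{eqn:fixedpointequivalence2}, so that $\hex=\shiftspace/{\sim_1}$ and $\tilde\hex=\shiftspace/{\sim_2}$. These are \emph{not} the same partition of $\shiftspace$: for example \eqref{eqn:groupequivalence1} glues $1\,3\,\recur{0}$ to $2\,3\,\recur{0}$, whereas \eqref{eqn:fixedpointequivalence1} glues $1\,3\,\recur{0}$ instead to $1\,4\,\recur{1}$ and leaves $2\,3\,\recur{0}$ in another class. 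Hence ``equivalent'' must be read as: the two self-similar structures $(\hex,\alb,\set{\shift_i})$ and $(\tilde\hex,\alb,\set{\shift_i})$ are isomorphic. The plan is to exhibit an explicit relabelling $g\colon\shiftspace\to\shiftspace$ with $w\sim_1 w'$ if and only if $g(w)\sim_2 g(w')$, so that $g$ descends to a homeomorphism $h\colon\hex\to\tilde\hex$.

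The starting observation is that both relations are generated \emph{only} by identifications of sibling cells: every instance of \eqref{eqn:groupequivalence1}--\eqref{eqn:groupequivalence2} and of \eqref{eqn:fixedpointequivalence1}--\eqref{eqn:fixedpointequivalence2} glues $\hex_{xi}$ to $\hex_{x(i+1)}$ for some $x\in\alb^*$, the two schemes differing only in how the boundaries are matched. Consequently the induced approximating graphs $\face_n$ (vertices $\alb^n$, edges between touching cells) are isomorphic: at level one the six cells form a hexagonal $6$-cycle, and inside each cell the six children again form a $6$-cycle. The content of Figure~2 is exactly the level-two isomorphism, which I would write explicitly as $\Phi(ab)=a\,\rho_a(b)$, where $\rho_a$ is the rotation $b\mapsto a+b$ when $a$ is even and the reflection $b\mapsto a-b$ when $a$ is odd (all $\bmod\,6$); a direct check then shows $\Phi$ carries every cross-cell edge of the first scheme to one of the second, and every internal $6$-cycle to an internal $6$-cycle.

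I would extend this to a map $g$ on all of $\shiftspace$ by self-similarity: to each successive letter $g$ applies a symmetry of the hexagon determined by the preceding letters, the first step being $\rho_{a_1}$ as above and the later steps the corresponding symmetries computed in the rotated/reflected frame. This is a cocycle valued in the dihedral group $D_6$ of the hexagon along the tree $\alb^*$; granting that it closes up consistently, $g$ is a continuous bijection of the compact space $\shiftspace$ (each output letter depends on only finitely many input letters, so $g$ is triangular and invertible level by level), and one checks on generators that $g$ sends each two-element $\sim_1$-class onto a two-element $\sim_2$-class. Then $g$ descends to the homeomorphism $h$, and the fact that $g$ fixes the first letter shows the two self-similar structures agree cell by cell, up to the attached hexagon symmetries.

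The hard part is precisely the consistency of this cocycle together with the infinite ramification of the hexacarpet. For odd $i$, the two schemes parametrise the shared boundary of $\hex_i$ and $\hex_{i+1}$ by genuinely different Cantor sets --- the ``corner'' set $\set{3,4}\times\set{0,5}^\omega$ (a single middle digit, then a corner tail) under \eqref{eqn:groupequivalence1} versus the ``coded'' set $\set{i+2,i+3}^\omega$ under \eqref{eqn:fixedpointequivalence1}, and analogously for even $i$. Thus matching the finite graphs $\face_n$ is not enough: I must verify that $g$ carries one boundary parametrisation onto the other \emph{as infinite sequences}, sending the corner tails $v\in\set{0,5}^\omega$ to their coded partners correctly and behaving compatibly at the sequences where two boundary arcs abut. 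Once this matching is confirmed at the first level, self-similarity propagates it to all levels, and the resulting $h$ witnesses that \eqref{eqn:groupequivalence1}--\eqref{eqn:groupequivalence2} and \eqref{eqn:fixedpointequivalence1}--\eqref{eqn:fixedpointequivalence2} define the same hexacarpet.
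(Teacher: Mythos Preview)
Your plan is essentially the paper's: they too build an explicit relabelling $f\colon\shiftspace\to\shiftspace$ conjugating $\sim_1$ to $\sim_2$, and your level-two map $\rho_a$ is exactly their formula at the second letter. The paper dissolves your cocycle-consistency worry by writing $f$ in closed form, $v_m=(-1)^{\alpha_{m-1}}u_m+\alpha_{m-1}$ with $\alpha_k=\sum_{j\le k}u_j$ (so the dihedral state is simply the running sum $\bmod\ 6$), after which the equivalence-preservation becomes the finite case-check you anticipated, of which they exhibit one representative.
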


\begin{proof}
We show that there is a self homeomorphism $f:\shiftspace\to\shiftspace$ which transforms the equivalence in \ref{eqn:groupequivalence1} and \ref{eqn:groupequivalence2} into the equivalence \ref{eqn:fixedpointequivalence1} and \ref{eqn:fixedpointequivalence2}. This map $f$ is given by $f(u) = v$, where
\[
v_1 := u_1\quad\text{and}\quad v_m := (-1)^{\alpha_{m-1}}u_m + \alpha_{m-1},
\]
where $\alpha_k = \sum_{j=1}^k u_j$. This map is continuous,  since how $f$ acts on the $n$th letter is independent of any future letters, $\delta(x,y) \leq (f(x),f(y))$. In fact, since $f$ acts bijectively on $\alb^n$ (which is easy to check by induction), this means that $f$ is an isometry with respect to $\delta_r$, and is thus a bijection as a map of from $\shiftspace$ to itself. Moreover, $f$  also descends to an isometry with respect to $\delta$, but to see this we need to know that $f$ preserves the equivalence relation, which we will show shortly.

Another way of seeing that $f$ is bijective comes from that fact that we can define an inverse $f^{-1}(v) = u$ where
\[
u_1:=v_1\quad\text{and}\quad u_m := (-1)^{\alpha_{m-1}}(v_m - \alpha_{m-1}).
\]
Showing that the two equivalences produce identical quotient spaces is a matter of showing that $f$ and $f^{-1}$ preserve equivalence, i.e. $u\sim v$ with respect to \ref{eqn:groupequivalence1}, \ref{eqn:groupequivalence2} implies that $f(u)\sim f(v)$ with respect to \ref{eqn:fixedpointequivalence1},\ref{eqn:fixedpointequivalence2}. This involves checking a few cases, we provide an example of such a case.

For the remainder of the proof, all equalities are assumed to be mod $6$. Now suppose that $u^{(1)}$ is of the form $xi3w$ and $u^{(2)}$ is of the form $xj3w$  where the $j = i+1$ and $i$ is odd, as in \ref{eqn:groupequivalence1}, and we assume that the $i$ and $j$ are in the $n$th position. Finally we assume, letting $\alpha^{(1)}_k = \sum_{l=1}^k u^{(l)}_l$ and $\alpha^{(2)}_k = \sum_{l=1}^k u^{(2)}_l$, that $\alpha^{(1)}_{n-1} = \alpha^{(2)}_{n-1}$ is odd. We need to vary all of the above for various cases of equivalence.

Let $v^{(l)} = f(u^{(l)})$ for $l=1,2$, then for $k< n$, $v_k^{(1)}=v_k^{(2)}$, and $v^{(1)}_n = v^{(2)}_n-1$.  Furthermore, $v_n^{(2)}$ and $\alpha_n^{(2)}$ is odd, so $v^{(2)}_{n+1} = v^{(2)}_{n} + 3 $, and
 \[v_{n+1}^{(1)} =v_{n}^{(1)}+3= v^{(2)}_{n-1} - (j-1) + 3 = v^{(2)}_{n} +4 = v^{(2)}_{n} -2 \mod 6, 
\] 
this is consistent with \ref{eqn:fixedpointequivalence1}.
 
 For $k> n+1$, if we have $u^{(1)}_k=u^{(2)}_k = 0$, then $v^{(1)}_k=u^{(1)}_{k-1}$ and $v^{(2)}_k=v^{(2)}_{k-1}$. The first instance (if it happens at all) where $u^{(1)}_k=u^{(2)}_k = 5 $, $\alpha^{(2)}_{k-1} = \alpha^{(2)}_n + 3$ will be odd, so $ v^{(2)}_k = v^{(2)}_n + 2$ and $\alpha^{(1)}_{k-1} = \alpha^{(1)}_n + 3$ will be even, so $v^{(1)}_k = v^{(2)}_n-1$. Any further instance where $u^{(1)}_k=u^{(2)}_k = 5$, with add or subtract 1 to alternatively and any instance where $u^{(1)}_k=u^{(2)}_k = 0 $ will add nothing. In this way, we have $v^{(1)}_k=v^{(2)}_n + 2$ or  $v^{(2)}_n + 3$ with $v^{(1)}_k=v^{(2)}_n -1$ or  $
 v^{(2)}_n - 2 $ respectively. This shows that $v^{(1)} \sim v^{(2)}$ in the sense of 
 \ref{eqn:fixedpointequivalence1}.

It is in this way that all cases can be checked.
\end{proof}

We define the \emph{shift map} $\shift:\shiftspace\to\shiftspace$ by $\shift(x_1x_2x_3\cdots) = x_2x_3\cdots$. In this way, the concatenation maps $\shift_i$ can be seen as branches of the inverse of $\shift$.

\begin{lem}\label{lem-3-2}
If we use the definition of $\hex$ from equation \ref{eqn:groupequivalence1} and \ref{eqn:groupequivalence2}, the shift map $\shift$ descends to a well defined map from $\hex$ to $\hex$, which we also call $\shift$. 
\end{lem}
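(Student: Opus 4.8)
The plan is to verify the one condition that makes a map on a quotient well defined: if $x\sim y$ in $\shiftspace$, then $\shift(x)\sim\shift(y)$. Granting this, the assignment $\shift([x]) := [\shift(x)]$ is independent of the chosen representative and hence yields a map $\hex\to\hex$; its continuity then follows from the continuity of $\shift$ on $\shiftspace$ together with the fact that $\pi:\shiftspace\to\hex$ is a quotient map. Because the equivalence classes of $\sim$ contain at most two elements (as established when the metric on $\hex$ was defined), it is enough to check the implication on each of the generating relations \eqref{eqn:groupequivalence1} and \eqref{eqn:groupequivalence2}, rather than on arbitrary chains of identifications.

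The structural observation I would exploit is that every generating relation has the uniform shape
\[
w^{(1)} = x\,i\,c\,v \quad\text{and}\quad w^{(2)} = x\,j\,c\,v,
\]
where $x\in\alb^*$ is a common prefix, $j = i+1 \bmod 6$, the tail $v\in\set{0,5}^\omega$ is common, and $c$ is a single ``middle'' letter whose admissible values are dictated by the parity of $i$ (namely $c\in\set{3,4}$ when $i$ is odd and $c\in\set{1,2}$ when $i$ is even). The essential point is that the two words differ in exactly one coordinate, the letter $i$ versus $j$, located immediately after the prefix $x$.

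I would then split into two cases according to whether $x$ is empty. If $x$ is the empty word, then $w^{(1)} = i\,c\,v$ and $w^{(2)} = j\,c\,v$, so applying the shift deletes the one differing letter and gives $\shift(w^{(1)}) = c\,v = \shift(w^{(2)})$; the images are literally equal, hence trivially equivalent. If instead $x = x_1 x_2\cdots x_m$ with $m\geq 1$, then $\shift(w^{(1)}) = x_2\cdots x_m\,i\,c\,v$ and $\shift(w^{(2)}) = x_2\cdots x_m\,j\,c\,v$; since the middle letter $c$ and the tail $v$ are untouched and the parity of $i$ is unchanged, this is again an instance of the same generating relation, now carried by the shorter prefix $x_2\cdots x_m\in\alb^*$, so $\shift(w^{(1)})\sim\shift(w^{(2)})$.

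Nothing in this argument is deep; the only points deserving care are (i) the reduction to generating relations, which rests on the already established ``at most two elements per class'' property, and (ii) confirming that shifting alters neither the parity of $i$ nor the fixed middle letter, so that a relation of type \eqref{eqn:groupequivalence1}/\eqref{eqn:groupequivalence2} is transported to a relation of the very same type. The empty-prefix case, in which the two representatives collapse to a single word, is the one worth isolating, since it is precisely the mechanism by which the shift \emph{forgets} an identification instead of propagating it.
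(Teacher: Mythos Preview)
Your proof is correct and follows essentially the same approach as the paper: verify that the generating relations \eqref{eqn:groupequivalence1} and \eqref{eqn:groupequivalence2} are preserved under $\shift$. The paper's own proof is a single sentence (``This is easily verified to be true in the equivalences in \eqref{eqn:groupequivalence1} and \eqref{eqn:groupequivalence2}''), whereas you have supplied the actual verification, correctly isolating the empty-prefix case where the two representatives collapse after shifting; the paper also asserts the converse implication $\shift(x)\sim\shift(y)\Rightarrow x\sim y$, which is not needed for well-definedness and which you rightly omit.
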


The proof of the above is a matter of showing, for $x,y\in\shiftspace$ that  $x\sim y$ if and only if $\shift(x)\sim\shift(y)$. This is easily verified to be true in the equivalences in \ref{eqn:groupequivalence1} and \ref{eqn:groupequivalence2}.

We turn to topological properties of the space, a natural question is what does the intersection of two neighboring cells look like.

\begin{prop}\label{cantor ramification}
If $i,j\in\alb$ and $i\neq j$, then either $\sigma_i(\hex) \cap \sigma_j(\hex)$ is empty or homeomorphic to the middle third Cantor set.
\end{prop}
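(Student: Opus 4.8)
The plan is to compute the intersection $\sigma_i(\hex)\cap\sigma_j(\hex)$ explicitly inside the quotient $\hex=\shiftspace/\!\sim$ and then recognize it topologically. A point of $\sigma_i(\hex)\cap\sigma_j(\hex)$ is an equivalence class possessing a representative that begins with the letter $i$ and one that begins with $j$; since $i\neq j$ these two representatives are distinct. I would first invoke the fact (established in the proof that $\delta$ descends to $\hex$) that every $\sim$-class has at most two elements. This has the structural consequence that no transitive chaining of the generating relations \eqref{eqn:groupequivalence1}--\eqref{eqn:groupequivalence2} can occur, so the nontrivial classes are \emph{exactly} the two-element pairs listed there. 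Hence such a point is a genuine pair $\set{w,w'}$ with $w$ beginning in $i$, $w'$ beginning in $j$, and $w\sim w'$ one of the listed identifications.

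Next I would read off which generating pairs have representatives with differing \emph{first} letters. Every identification has the form $x k\cdots\sim x k'\cdots$ with the two words agreeing on the prefix $x$, so two equivalent words differ in their first coordinate precisely when $x$ is the empty word. This immediately gives that $\sigma_i(\hex)\cap\sigma_j(\hex)=\varnothing$ unless $j\equiv i\pm1\pmod 6$. Taking $j\equiv i+1$ without loss of generality, the intersection equals
\[
\sigma_i(\hex)\cap\sigma_j(\hex)=\set{\pi(iav) : a\in A_i,\ v\in\set{0,5}^\omega},
\]
where $A_i=\set{3,4}$ if $i$ is odd and $A_i=\set{1,2}$ if $i$ is even, by specializing \eqref{eqn:groupequivalence1}--\eqref{eqn:groupequivalence2} to $x=\varnothing$.

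Then I would set up the parametrization $\phi:A_i\times\set{0,5}^\omega\to\sigma_i(\hex)\cap\sigma_j(\hex)$, $\phi(a,v)=\pi(iav)$, and show it is a homeomorphism. Surjectivity is the characterization above. Injectivity again uses the ``at most two elements'' property: the class of $iav$ is exactly $\set{iav,\,jav}$ (the partner $jav$ has a different first letter, so $\pi(iav)=\pi(ia'v')$ forces $ia'v'=iav$), whence distinct parameters give distinct points. The map $\phi$ is continuous as the composite of the coordinate inclusion $(a,v)\mapsto iav\hookrightarrow\shiftspace$ with the continuous projection $\pi$. Because the domain is compact and $\hex$ is metrizable (so the intersection is Hausdorff), $\phi$ is a homeomorphism. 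Finally $A_i\times\set{0,5}^\omega$ is a two-point discrete space times $\set{0,5}^\omega\cong\set{0,1}^{\mathbb{N}}$, hence itself homeomorphic to $\set{0,1}^{\mathbb{N}}$, i.e.\ to the middle third Cantor set.

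The main obstacle is the structural step confirming that no gluing beyond the top-level relations occurs among the candidate boundary points. The delicate case is that $0$ and $5$ differ by $1\pmod 6$, so one might fear that the tail coordinates $v\in\set{0,5}^\omega$ trigger further identifications; I would rule this out by noting that any such relation would require the immediately following letter to lie in $\set{3,4}$ or $\set{1,2}$, never in $\set{0,5}$, so the tail introduces no additional collapse. Equivalently, this is the verification that the equivalence relation generated by \eqref{eqn:groupequivalence1}--\eqref{eqn:groupequivalence2} coincides with the reflexive--symmetric closure of the listed pairs; once that is secured, the identification with the Cantor set is routine.
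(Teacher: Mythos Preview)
Your argument is correct and follows essentially the same route as the paper's proof: identify the intersection (for $j\equiv i+1$) with the set of words $iav$ with $a\in\{3,4\}$ or $\{1,2\}$ and $v\in\{0,5\}^\omega$, then recognize this as a copy of $\{0,1\}^{\mathbb N}$. The paper is terser, simply saying that ``by ignoring the leading $i$ or $j$, this set is naturally homeomorphic to the shift space of $\{0,1\}$''; you supply the careful justification (injectivity via the two-element classes, continuity plus compactness for the homeomorphism, and the check that the $\{0,5\}$-tails trigger no further gluing) that the paper leaves implicit.
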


\begin{proof}
The set $\sigma_i(\hex)$ consists of infinite words beginning with the letter $i$, the intersection with $\sigma_j(\hex)$ is the set of words which begin with an $i$ which are equivalent to the words which start with a $j$. If $i \neq j\pm 1$, then this intersection is empty, Since there is no loss in generality, we assume that $j=i+1$, if we further assume that $i$ is odd, then \ref{eqn:groupequivalence1} tells us that $\sigma_i(\hex)\cap\sigma_j(\hex)$ is given by the words
\[
i3v\sim j3v \quad\text{or}\quad i4v \sim j4v
\]
where $v\in\shiftspace$ is an infinite word consisting of $0$'s and $5$'s. By ignoring the leading $i$ or $j$, this set is naturally homeomorphic to the shift space of $\set{0,1}$ --- which is in turn homeomorphic to the middle third Cantor set (see, for example, \cite{kig01}). The case where $i$ is even follows the exact same argument.
\end{proof}

If we examine planar realizations of the approximating graphs, we see a large ``hole'' in the center. We see that this hole consists of truncation of the elements of the set $C$ consisting of words of the form $ijv$ where $i,j\in\alb$ where $i$ is any letter and $j$ is a $3$ or $2$, and $v\in\shiftspace$ is a word consisting of $0$'s and $1$'s.

\begin{prop}
The set $C$ is homeomorphic to the circle.
\end{prop}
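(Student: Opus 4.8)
The plan is to realize $C$ as a quotient of a Cantor set by an explicit pairing and then to recognize that pairing as the one gluing twelve arcs cyclically end-to-end. Write $\widehat{C} = \{\, w \in \shiftspace : w_1 \in \alb,\ w_2 \in \{2,3\},\ w_k \in \{0,1\}\ \forall k \geq 3\,\}$, a closed (hence compact) subset of $\shiftspace$ that is itself a Cantor set, and note $C = \pi(\widehat{C})$. Since $\widehat{C}$ is compact and $\pi$ is continuous into the Hausdorff space $\hex$, the restriction $\pi|_{\widehat{C}}$ is a quotient map, so $C$ is homeomorphic to $\widehat{C}/\!\sim$. The first step is to determine this restricted relation completely. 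Using that every $\sim$-class has at most two elements, one checks position by position which elementary relations of \eqref{eqn:groupequivalence1}--\eqref{eqn:groupequivalence2} can apply to a word of $\widehat{C}$; because those relations require a tail in $\{0,5\}^\omega$ while words of $\widehat{C}$ have tails in $\{0,1\}^\omega$, only words that are eventually $0$ get paired, and the pairs fall into exactly three families: (I) a relation in the first letter, pairing $[ij0^\omega]$ across different $i$; (II) a relation in the second letter, pairing $[i210^\omega]\sim[i310^\omega]$; and (III) relations in the tail, pairing $[x010^\omega]\sim[x110^\omega]$ for every finite word $x$.

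Next I would show that the tail factor is an arc. Let $D = \pi(\{0,1\}^\omega)$, the quotient of $\{0,1\}^\omega$ by the relations (III). Define the reflected-binary coordinate $\psi\colon \{0,1\}^\omega \to [0,1]$ as the continuous function determined by $\psi(0v) = \tfrac12\psi(v)$ and $\psi(1v) = 1 - \tfrac12\psi(v)$. A short computation gives $\psi(0^\omega)=0$, $\psi(10^\omega)=1$, and $\psi(x010^\omega)=\psi(x110^\omega)$ for every $x$, and shows that the fibres of $\psi$ are exactly the classes of (III); hence $\psi$ descends to a continuous bijection $D \to [0,1]$, a homeomorphism by compactness. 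Thus $D$ is an arc with endpoints $[0^\omega]$ and $[10^\omega]$. Because the $\sigma_i$ are injective on $\hex$, each $A_{ij} := \sigma_{ij}(D)$ is again an arc with endpoints $P^0_{ij} = [ij0^\omega]$ and $P^1_{ij} = [ij10^\omega]$, and $C = \bigcup_{i\in\alb}\bigcup_{j\in\{2,3\}} A_{ij}$.

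Finally I would assemble the twelve arcs. The crucial point is that the cross-arc identifications (I) and (II) occur \emph{only at endpoints}: family (II) glues the endpoints $P^1_{i2}\sim P^1_{i3}$, while family (I) glues the endpoints $P^0_{ij}$ in the pattern dictated by the parity rule ($\{1,2\},\{3,4\},\{5,0\}$ among the arcs with $j=3$, and $\{0,1\},\{2,3\},\{4,5\}$ among those with $j=2$). Forming the finite graph whose edges are the twelve arcs and whose vertices are the glued endpoint-classes, one checks that every vertex has degree two and that a single traversal visits all twelve arcs, so the graph is one $12$-cycle. Consequently $C$ is twelve arcs glued cyclically end-to-end; packaging this as a continuous bijection from the abstract $12$-gon onto $C$ and invoking compactness yields $C \cong S^1$.

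The hard part will be the middle step. The identifications (III) are \emph{not} the naive ``two binary expansions of a dyadic'' identifications for the usual value $\sum_k v_k 2^{-k}$ --- under that coordinate they would glue interior points and destroy the manifold structure --- so the real content is to produce the correct reflected coordinate $\psi$ and verify that its fibres coincide \emph{exactly} with the (III)-classes (and no others collapse). Once $D$ is known to be an arc with endpoints $[0^\omega]$ and $[10^\omega]$, the endpoint bookkeeping of the last step is a finite, routine verification.
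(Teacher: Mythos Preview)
Your argument is correct and follows the same overall strategy as the paper's proof: identify the restricted equivalence on $\widehat C$, show the tail quotient $\{0,1\}^\omega/\text{(III)}$ is an arc, and then assemble arcs cyclically into a circle. The differences are matters of packaging. The paper first groups the twelve pieces in pairs by the leading letter $i$, observing that $C\cap\sigma_i(\hex)$ is two arcs joined at $[i210^\omega]=[i310^\omega]$ and hence itself an arc; it then glues six arcs around the hole. You keep all twelve arcs and check the $12$-cycle directly, which is equally valid. More substantively, for the key step that $\{0,1\}^\omega$ modulo $x010^\omega\sim x110^\omega$ is an interval, the paper simply cites the limit-space description of the Grigorchuk group in Nekrashevych's book, whereas you construct the homeomorphism explicitly via the reflected-binary (Gray-code) coordinate $\psi(0v)=\tfrac12\psi(v)$, $\psi(1v)=1-\tfrac12\psi(v)$. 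Your route is more self-contained and makes transparent why the endpoints of the arc are exactly $[0^\omega]$ and $[10^\omega]$, at the cost of a short IFS-overlap computation to verify that the fibres of $\psi$ coincide \emph{precisely} with the (III)-classes; the paper's citation is quicker but less illuminating.
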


\begin{proof}
If we consider $C\cap\shift_i(\hex)$ (the subset starting in $i$), then we have that $i21\overline{0}\sim i31\overline{0}$ and $ix301\overline{0}\sim ix311\overline{0}$. for any finite word $x$. The shift space of $\set{0,1}$ with $x01\overline{0} \sim x11\overline{0}$ is homeomorphic to the unit interval (this is seen in the limit space of the Grigorchuk group, see \cite{Nek05} section 3.5.3).

In this homeomorphism, the endpoints of the interval are $\overline{0}$ and $1\overline{0}$. So we have two copies of the unit interval corresponding to the sets starting with $i2$ and $i3$, which are identified at one of the endpoints. This shows that each set $C\cap\shift_i(\hex)$ is itself isomorphic to the interval. These intervals are in turn identified at their endpoints, $i2\overline{0}\sim k2\overline{0}$ and $i3\overline{0}\sim \ell3\overline{0}$, where $k = i\pm 1$ and $\ell = i\mp 1$, depending on whether $i$ is odd or even.
\end{proof}

From the results above we obtain, in particular, the following theorem. 

\begin{thm}\label{thm-K}
The self-similar set $K$, defined above and called the Stichartz hexacarpet,  is an infinitely ramified fractal not homeomorphic to $T_0$.
\end{thm}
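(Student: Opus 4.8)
The plan is to treat the two assertions separately, since they rest on different earlier results.

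First, that $K$ is infinitely ramified is essentially immediate from Proposition \ref{cantor ramification}. Recall that a self-similar structure is finitely ramified (in the sense of Kigami, \cite{kig01}) precisely when $\sigma_i(K)\cap\sigma_j(K)$ is a finite set for all $i\neq j$. Proposition \ref{cantor ramification} shows that whenever this intersection is nonempty it is homeomorphic to the middle-third Cantor set, hence uncountable. Thus at least one cell intersection is infinite, so $K$ is infinitely ramified; I would record this in a sentence or two. Note in particular that this property alone cannot separate $K$ from $T_0$, since adjacent cells of $(T_0,d_\infty)$ share an entire edge, so the genuine work lies in the non-homeomorphism.

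For the second assertion I would distinguish $K$ from $T_0$ by the topological invariant of simple connectivity. By Theorem \ref{thm-2-6}(1), $(T_0,d_\infty)$ is homeomorphic to the Euclidean triangle, which is a closed disk and therefore contractible; in particular $\pi_1(T_0)=0$ and every embedded loop bounds. The strategy is to show, by contrast, that $K$ is \emph{not} simply connected, using the circle $C$ produced above: I claim the inclusion $\iota\colon C\hookrightarrow K$ represents a nontrivial class in $\pi_1(K)$, whence $K\not\cong T_0$. To prove $C$ is essential I would construct a continuous ``winding'' map $f\colon K\to S^1$ whose restriction to $C$ has degree one. The six cells $\sigma_0(K),\dots,\sigma_5(K)$ are arranged cyclically around the central hole and, by Proposition \ref{cantor ramification}, consecutive cells $\sigma_i(K)$ and $\sigma_{i+1}(K)$ are glued along a Cantor set while non-consecutive cells are disjoint. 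I would first build an angular map $g\colon\Sigma\to S^1$ from the cell addresses of words, arranged so that $g$ equals the common ``seam'' angle $\theta_i$ on the Cantor set gluing $\sigma_i$ to $\sigma_{i+1}$; if this value agrees across every identification in \eqref{eqn:groupequivalence1} and \eqref{eqn:groupequivalence2}, then $g$ descends to a well-defined continuous $f\colon K\to S^1$. Tracing $f$ along the parametrization of $C$ obtained in the proof that $C$ is a circle, one checks that $f\circ\iota$ winds once around $S^1$. Hence $(f\circ\iota)_*=f_*\circ\iota_*$ is an isomorphism of $\pi_1(S^1)\cong\mathbb{Z}$, forcing $\iota_*$ to be injective; so $[\iota]\neq 0$ and $\pi_1(K)\neq 0$, contradicting $\pi_1(T_0)=0$.

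The hard part will be the construction and verification of the winding map $f$: one must choose the angular coordinate so that it is simultaneously continuous on each cell and locally constant on the gluing Cantor sets, and then confirm that it respects \emph{every} identification in \eqref{eqn:groupequivalence1}--\eqref{eqn:groupequivalence2}, including the self-similar ones occurring at all deeper levels, so that it genuinely descends to the quotient $K=\Sigma/\!\sim$. An essentially equivalent route I would keep in reserve is to build a retraction $r\colon K\to C$ by radial collapse onto the inner circle, which makes $\iota_*$ injective on $\pi_1$ directly; or, more robustly, to show that the first \v{C}ech cohomology $\check H^1(K;\mathbb{Z})$ is nonzero using the non-planar graph approximations of Section \ref{sec:graphiso}, at the cost of heavier machinery than the direct loop argument.
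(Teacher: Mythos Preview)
Your treatment of infinite ramification is the same as the paper's: both invoke Proposition~\ref{cantor ramification} to see that adjacent first-level cells meet in a Cantor set, hence in infinitely many points.

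For the non-homeomorphism, however, the paper takes a different and shorter route. Rather than computing $\pi_1$, it uses \emph{topological dimension}: since, by Proposition~\ref{cantor ramification}, every point of $K$ has arbitrarily small neighborhoods whose frontier is contained in a finite union of Cantor sets (hence zero-dimensional), $K$ has small inductive dimension at most~$1$; being connected with more than one point, it has dimension exactly~$1$. The triangle $T_0$ has topological dimension~$2$, and dimension is a homeomorphism invariant, so $K\not\cong T_0$. The paper simply cites the relevant facts from Engelking's dimension theory.

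Your $\pi_1$ argument is a genuine alternative and is sound in outline: $T_0$ is contractible, while the circle $C$ around the central hole should be essential in $K$. Two remarks. First, the construction of the winding map is easier than you make it: you need not define $g$ on $\Sigma$ and then check the full tower of identifications. Work directly on $K$. The two Cantor sets $\sigma_i(K)\cap\sigma_{i-1}(K)$ and $\sigma_i(K)\cap\sigma_{i+1}(K)$ are disjoint closed subsets of the compact metrizable (hence normal) space $\sigma_i(K)$, so Urysohn's lemma gives a continuous $f_i:\sigma_i(K)\to[\theta_{i-1},\theta_i]\subset S^1$ sending them to the two endpoints; these glue to a continuous $f:K\to S^1$. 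Since the arc $C\cap\sigma_i(K)$ joins the two Cantor sets, $f$ carries it to a path in $[\theta_{i-1},\theta_i]$ from one endpoint to the other, and concatenating over $i$ gives $f|_C$ of degree one. Second, what each approach buys: the dimension argument is a two-line appeal to standard theory and actually yields a stronger distinction (dimension~$1$ versus~$2$), while your argument is more hands-on and exhibits a concrete topological obstruction, at the cost of a longer construction.
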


\begin{proof}
This follows by proposition \ref{cantor ramification}. Since $K$ can be disconnected into arbitrarily small pieces by removing a topological Cantor set, it must be topologically one dimensional. On the other hand, $T_0$ is topologically 2 dimensional, thus the spaces cannot be homeomorphic. The general theory of topological dimension can be found in \cite{engelking} (in particular see Definitions 
1.1.1 and 1.6.7, Theorems 1.4.5, 4.1.4 and 4.1.5). 
\end{proof}

\section{Graph Approximations and Isomorphism\red{\ (Aaron and Ryan et al.)}}\label{sec:graphiso}

In this section we show that the self-similar structure from the last section can be approximated by graphs constructed from repeated barycentric subdivision of a 2-simplex.

We make this precise by constructing approximating graphs to the hexacarpet $\hex$. Taking $\alb=\set{0,1,\ldots,5}$ as in the last section, we define the graph $\face_n=(\alb^n,\edge_n)$ with $\alb^n$ as the set of vertices. We define the edge relations $\edge_n$ where two words $u$ and $w$ are connected if there are $x,y\in\shiftspace$ such that the concatenated words $wx\sim uy$ according to the equivalence defining $\hex$, as in equations \eqref{eqn:groupequivalence1} and \eqref{eqn:groupequivalence2}. We can alternatively define the the set of vertices to be the set of $n^{th}$ level cells of $\hex$, where $(\hex_w,\hex_u)$ are in the edge relation if $\hex_w\cap\hex_u \neq \emptyset$. In this way, we can think of the vertices of $\face_n$ as being $n$th level cells of $\hex$.

We now exhibit partitions of the vertex set $\alb^n$ and edge set $\edge_n$ which will be useful in discussing edge relations:  Let $\mathsf{W}_{1}= \set{x_1x_2\cdots x_n~|~x_i\in\set{0,5}, 2 \leq i \leq n}$ be the set of words of length $n$ whose second through last letters are 0's or 5's.  For each $x=x_1x_2\cdots x_n\in\alb^n\backslash \mathsf{W}_{1}$ there is at least one $i\in\set{2,3,\cdots ,n}$ such that $x_{i}\notin\set{0,5}$.  So we may define the function $l:\alb^n\backslash \mathsf{W}_{1}\to\set{2,3,\cdots ,n}$ by $l:x_1x_2\cdots x_n\to\max\set{2 \leq i \leq n~|~x_{i}\notin\set{0,5}}$.  Now for $2 \leq k \leq n$ define $\mathsf{W}_{k}=l^{-1}\left(k\right)=\set{x_1x_2\cdots x_n~|~x_k \notin\set{0,5}, x_{i} \in\set{0,5}, k<i\leq n}$.  These are the words which end in exactly $(n-k)$ $0$s and $5$s.

From the equivalence relation $\sim$ defined on $\shiftspace$ we recover the following edge relations on $\alb^n$ by truncating the relations after the $n^{th}$ coordinate:

\begin{itemize}
\item[] $\mathsf{F}_1=\set{\{xi,xj\}~|~x\in\alb^{n-1}, j=i+1\mod6}$.
\item[] $\mathsf{F}_k=\set{\{xi\alpha v,xj\alpha v\}~|~x\in\alb^{k-2}, j=i+1\mod6, \alpha = 3 ~\text{or}~4,~v\in\set{0,5}^{n-k}}$, for $2\leq k\leq n$.
\end{itemize}

We simplify the edge relations by writing $\set{xi\alpha v,xj\alpha v}$.  Here $\alpha\in\set{3,4}$ is the same in both components of the relation when $i$ odd, $j=i+1\mod6$.  We take $\alpha\in\set{1,2}$ to be the same on both sides when $i$ even, $j=i+1\mod6$.

We now collect some information about the cardinalities of the vertex and edge sets.

\begin{prop}\label{prop-4-1}
The following are apparent from our construction:
\begin{enumerate}
\item$|\mathsf{W}_{1}|=3\cdot2^n$ and $|\mathsf{W}_{k}|=6^{k-1}\cdot4\cdot2^{n-k}=3^{k-1}\cdot2^{n+1}$, for $2\leq k\leq n$.
\item$|\mathsf{F}_1|=6^n$ and $|\mathsf{F}_k|=6^{k-1}\cdot2^{n-k+1}$, for $2\leq k\leq n$.
\item$|\edge_n|=\sum_{k=1}^{n}|\mathsf{F}_k|=2^{n-1}\cdot (3^{n+1}-3)$.
\end{enumerate}
\end{prop}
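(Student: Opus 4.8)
The plan is to treat all three statements as elementary counting, once the combinatorial bookkeeping behind the partitions $\set{\mathsf{W}_k}$ and $\set{\mathsf{F}_k}$ is made explicit; the only genuine content is that $\set{\mathsf{F}_k}_{k=1}^n$ is a partition of $\edge_n$, so that the cardinalities in (2) may legitimately be summed in (3).

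For (1) I would count directly from the defining constraints. A word in $\mathsf{W}_1$ has its first letter free (six choices) and each of its remaining $n-1$ letters in $\set{0,5}$ (two choices each), so $|\mathsf{W}_1| = 6\cdot 2^{n-1} = 3\cdot 2^n$. For $2 \le k \le n$, a word in $\mathsf{W}_k = l^{-1}(k)$ has its first $k-1$ letters unconstrained ($6^{k-1}$ choices), its $k$-th letter in $\set{1,2,3,4} = \alb\setminus\set{0,5}$ (four choices), and its last $n-k$ letters in $\set{0,5}$ ($2^{n-k}$ choices); hence $|\mathsf{W}_k| = 6^{k-1}\cdot 4\cdot 2^{n-k}$, and substituting $6^{k-1} = 2^{k-1}3^{k-1}$ gives $3^{k-1}\cdot 2^{n+1}$.

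For (2) I would count the edges in each $\mathsf{F}_k$ the same way, paying attention to the parity convention on $\alpha$. For $\mathsf{F}_1$ the prefix $x$ ranges over $\alb^{n-1}$ ($6^{n-1}$ choices) and, for fixed $x$, the pairs $\set{xi,xj}$ with $j = i+1 \bmod 6$ are exactly the six edges of the $6$-cycle on the last letter; these are distinct because the unordered pairs $\set{i,i+1 \bmod 6}$, $i=0,\dots,5$, are distinct, so $|\mathsf{F}_1| = 6^{n-1}\cdot 6 = 6^n$. For $2\le k\le n$ the data determining an edge of $\mathsf{F}_k$ are the prefix $x\in\alb^{k-2}$ ($6^{k-2}$), the flip letter $i$ at position $k-1$ (six choices, with $j=i+1$), the anchor $\alpha$ at position $k$ (two choices, namely $\set{3,4}$ when $i$ is odd and $\set{1,2}$ when $i$ is even), and the tail $v\in\set{0,5}^{n-k}$ ($2^{n-k}$). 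Distinctness again holds because each unordered flip pair $\set{i,i+1}$ has a unique reference $i$ and the admissible $\alpha$-sets of consecutive $i$ are disjoint. Multiplying, $|\mathsf{F}_k| = 6^{k-2}\cdot 6\cdot 2\cdot 2^{n-k} = 6^{k-1}\cdot 2^{n-k+1}$.

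The crux, and the step I expect to be the main obstacle, is establishing for (3) that $\edge_n = \bigsqcup_{k=1}^n \mathsf{F}_k$ is a genuine disjoint union. The key structural fact is that two distinct $\sim$-equivalent infinite words differ in exactly one coordinate: the equivalence $xi\alpha v\sim xj\alpha v$ forces agreement on the prefix $x$, on the anchor $\alpha$, and on the tail $v\in\set{0,5}^\omega$, flipping only the single letter $i$ to $j=i+1$. Consequently, any edge $\set{u,w}$ of $\face_n$ has endpoints differing in exactly one coordinate, say position $p\le n$; if $p=n$ the anchor and tail lie entirely in the extension and the edge belongs to $\mathsf{F}_1$, while if $p<n$ then necessarily $u_{p+1}=w_{p+1}=\alpha\notin\set{0,5}$ and $u_m=w_m\in\set{0,5}$ for $m>p+1$, placing the edge in $\mathsf{F}_{p+1}$ and in no other $\mathsf{F}_k$. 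Once this partition is in hand, summing the cardinalities from (2) is a geometric series,
\[
\sum_{k=1}^n |\mathsf{F}_k| = 6^n + \sum_{k=2}^n 6^{k-1}2^{n-k+1} = 6^n + 2^n\sum_{j=1}^{n-1} 3^j = 6^n + 2^{n-1}(3^n - 3) = 2^{n-1}(3^{n+1}-3),
\]
which gives (3). The routine verifications --- that $\set{\mathsf{W}_k}$ partitions $\alb^n$ via the function $l$, and that the parity convention yields exactly two admissible values of $\alpha$ --- are of the same case-checking type already illustrated earlier in the section, so I would record them in a remark rather than grind through them.
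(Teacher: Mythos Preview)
Your argument is correct. The paper gives no proof of this proposition at all---it is stated as ``apparent from our construction'' and left at that---so your direct counting for (1) and (2) and geometric-series summation for (3) simply make explicit what the authors regard as immediate. Your additional care in verifying that $\{\mathsf{F}_k\}_{k=1}^n$ genuinely partitions $\edge_n$ (via the observation that $\sim$-equivalent infinite words differ in a unique coordinate, which then determines the index $k$) is more than the paper provides, but it is exactly the point one would need to justify the sum in (3), and your reasoning there is sound.
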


\begin{prop}\label{prop-4-2}
Every vertex in $\mathsf{W}_1$ has degree two, with both edge relations in $\mathsf{F}_1$.
For $k\geq 2$, every vertex in $\mathsf{W}_k$ has degree three, with two edge relations in $\mathsf{F}_1$ and the third in $\mathsf{F}_k$.
\end{prop}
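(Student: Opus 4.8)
The plan is to read off, for an arbitrary vertex $w = w_1\cdots w_n$, exactly which of the edge relations in the decomposition $\edge_n = \bigcup_{k=1}^n \mathsf{F}_k$ are incident to $w$. The whole argument rests on two structural observations about these relations: an edge of $\mathsf{F}_1$ modifies only the last letter $w_n$, whereas an edge of $\mathsf{F}_k$ (for $k\ge 2$) modifies only the letter in position $k-1$ and forces a rigid signature on the tail of the word, namely that its endpoints $xi\alpha v$ and $xj\alpha v$ carry a symbol $\alpha\in\set{1,2,3,4}$ in position $k$ and only symbols from $\set{0,5}$ in positions $k+1,\dots,n$.

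First I would dispatch the $\mathsf{F}_1$ edges uniformly, before splitting into cases. For any $w = x w_n$ with $x = w_1\cdots w_{n-1}$, the relation $\mathsf{F}_1$ pairs $w$ with $x(w_n+1)$ (reading $w_n$ as the symbol $i$) and with $x(w_n-1)$ (reading $w_n$ as $j$), both mod $6$; since $w_n+1\not\equiv w_n-1 \pmod 6$ these are two distinct neighbors, and they are the only $\mathsf{F}_1$-neighbors of $w$. Thus every vertex carries exactly two $\mathsf{F}_1$-edges, regardless of which $\mathsf{W}_k$ it inhabits. For $w\in\mathsf{W}_1$ I would then argue there are no further edges: an edge of $\mathsf{F}_k$ with $k\ge 2$ requires the symbol in position $k$ to lie in $\set{1,2,3,4}$, but membership in $\mathsf{W}_1$ forces $w_k\in\set{0,5}$ for all $k\ge 2$, so $w$ cannot be an endpoint of any such edge. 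Hence $\deg w = 2$, with both edges in $\mathsf{F}_1$.

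For $w\in\mathsf{W}_k$ with $k\ge 2$, I would first rule out the relations $\mathsf{F}_{k'}$ with $k'\ge 2$ and $k'\ne k$, using the definition of the index function $l$. If $k' > k$ then $w_{k'}\in\set{0,5}$ (as $k$ is the largest index $\ge 2$ with $w_k\notin\set{0,5}$), contradicting the requirement that position $k'$ be non-$\set{0,5}$; and if $2\le k' < k$ then position $k$ sits inside the forced-$\set{0,5}$ tail $\set{k'+1,\dots,n}$ of any $\mathsf{F}_{k'}$-edge, again contradicting $w_k\notin\set{0,5}$. It then remains to show $w$ lies on exactly one $\mathsf{F}_k$-edge. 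Writing $w = x' c \alpha v$ with $c = w_{k-1}$, $\alpha = w_k\in\set{1,2,3,4}$, and $v = w_{k+1}\cdots w_n\in\set{0,5}^{n-k}$, I would run the parity bookkeeping forced by \eqref{eqn:groupequivalence1} and \eqref{eqn:groupequivalence2}: the value $\alpha\in\set{3,4}$ requires the leading symbol $i$ of the pair to be odd, while $\alpha\in\set{1,2}$ requires $i$ even. Since $c$ has a definite parity, exactly one of the two candidate partners $x'(c+1)\alpha v$ and $x'(c-1)\alpha v$ respects the required parity, giving a single $\mathsf{F}_k$-neighbor in each of the four parity/value combinations. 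Finally, because $k-1 < n$, this neighbor differs from $w$ only in position $k-1$ and so is distinct from the two $\mathsf{F}_1$-neighbors (which differ from $w$ only in position $n$); therefore $\deg w = 3$, with two edges in $\mathsf{F}_1$ and one in $\mathsf{F}_k$.

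I expect the only delicate point to be this parity count — showing that each $w\in\mathsf{W}_k$ has exactly one, rather than zero or two, $\mathsf{F}_k$-partners. Everything else is a direct reading of the definitions of $\mathsf{W}_k$, $\mathsf{F}_k$, and $l$. The cleanest way to organize the count is to note that, for $\alpha$ fixed, the parity of the ``$i$''-symbol in the pair $\set{x'i\alpha v,\,x'j\alpha v}$ is pinned down, so among the two candidates $c\pm 1$ exactly one is admissible; this avoids an exhaustive enumeration and makes the ``exactly one'' uniform across cases.
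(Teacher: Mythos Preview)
Your proof is correct and follows the same approach as the paper's: first show every vertex has exactly two $\mathsf{F}_1$-edges, then argue that vertices in $\mathsf{W}_1$ meet no other relation while vertices in $\mathsf{W}_k$ (for $k\ge 2$) meet exactly one edge of $\mathsf{F}_k$. Your version is in fact more thorough --- the paper asserts ``one additional edge relation in $\mathsf{F}_k$'' without spelling out the parity bookkeeping or the exclusion of $\mathsf{F}_{k'}$ for $k'\ne k$, both of which you carry out explicitly.
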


\begin{proof}
First we note that each vertex $x=x_1x_2\cdots x_n$ has exactly two edge relations in $\mathsf{F}_1$, namely $\{x_1x_2\cdots x_{n-1}y,x\}$ and $\{x,x_1x_2\cdots x_{n-1}z\}$ where $y=x_n-1 \mod6$ and $z=x_n+1 \mod6$.  In addition, for all $2\leq k\leq n$ each vertex in $\mathsf{W}_k$ has one additional edge relation in $\mathsf{F}_k$.   By construction, the vertices in $\mathsf{W}_1$ do not have any edge relations in $\mathsf{F}_k$ for all $k\neq 1$.
\end{proof}

We now collect our information on the graph approximation of the hexacarpet and the graph constructed using repeated Barycentric subdivision of the $2$-simplex in order to establish an isomorphism between them. We begin by introducing some information which will be useful in the proof of Theorem~\ref{thm-4-7} at the end of this section.

\begin{prop}\label{prop-4-3}
There exists a labeling of $B^n(T_0)$ with the strings in $\alb^n$ that establishes a bijection between the two sets.
\end{prop}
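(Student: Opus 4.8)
The plan is to construct the labeling by induction on $n$, exploiting the fact that both $B^n(T_0)$ and $\alb^n$ have a natural $6$-fold self-similar recursive structure. The base case $n=1$ is handled by hand: $B^1(T_0)$ consists of exactly six minimal $2$-simplexes of the form $[v_i,b_{ij},b]$, and $\alb^1 = \set{0,1,\ldots,5}$ has six elements, so I would fix a concrete bijection between these six triangles and the six letters. The crucial point is that this base labeling must be chosen compatibly with the cyclic adjacency structure encoded in the equivalence relation $\sim$ of equations \eqref{eqn:groupequivalence1} and \eqref{eqn:groupequivalence2}: the alternation between odd and even letters in those relations corresponds to the distinction between vertex-adjacent and side-adjacent children in Definition~\ref{defn-2-4}, so I would order the six children cyclically around the barycenter $b$ and assign the letters $0,1,\ldots,5$ so that consecutive letters $i,j=i+1\bmod 6$ correspond to adjacent children, with the odd/even parity matching the two types of shared edges ($[v_i,b]$ versus $[b_{ij},b]$).

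For the inductive step, I would use the recursive definition $B^n(T_0) = B(B^{n-1}(T_0))$ together with the concatenation structure $\alb^n = \set{xi : x\in\alb^{n-1}, i\in\alb}$. Given a length-$(n-1)$ label $x$ already assigned to some triangle $T\in B^{n-1}(T_0)$, its six children in $B^n(T_0)$ are labeled $x0, x1,\ldots,x5$ using the same local cyclic rule from the base case, now applied internally to $T$. Since $|B^n(T_0)| = 6^n = |\alb^n|$ by the cardinality count following Definition~\ref{defn-2-4}, and since the map respects the $6$-to-$1$ parent-child refinement on both sides, the induction immediately yields a bijection; I would verify injectivity by noting that two distinct length-$n$ words either differ in their length-$(n-1)$ prefix (so their triangles have distinct parents and hence are distinct) or agree on the prefix and differ in the last letter (so they are distinct children of a common parent).

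The main obstacle — and the part deserving genuine care — is ensuring that the local cyclic labeling rule is chosen \emph{consistently across all parents} so that it aligns with the global edge relations $\edge_n$ rather than merely producing some arbitrary bijection. The proposition as stated only asserts the existence of a bijection, so in principle any labeling works; however, the labeling is useful precisely because it will be leveraged in Theorem~\ref{thm-4-7} to establish a graph isomorphism, which means the bijection must send the adjacency structure of $B^n(T_0)$ (triangles sharing a side, per Definition~\ref{defn-2-4}) to the edge relations $\mathsf{F}_1,\ldots,\mathsf{F}_n$ on $\alb^n$. Thus I would build into the inductive construction a bookkeeping invariant that tracks how the three sides of each triangle $T$ (carrying boundary data from its parent) map to specific positions in the cyclic ordering of its children, so that side-adjacencies \emph{between} siblings (handled by $\mathsf{F}_1$) and adjacencies \emph{across} the boundary $\partial T$ between children of different parents (handled by the deeper relations $\mathsf{F}_k$, which act through trailing $0$'s and $5$'s) are both respected. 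Verifying that this boundary-orientation invariant is preserved under subdivision — in particular that the shared face between two adjacent level-$n$ triangles from different parents receives matching trailing-letter encodings on both sides — is the technical heart of the argument and is exactly what makes Proposition~\ref{prop-2-5}(3) (if $s\subset S$ and $t\subset T$ are adjacent then $S=T$ or $S$ is adjacent to $T$) indispensable.
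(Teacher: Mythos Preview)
Your overall approach---induction on $n$, cyclic labeling of the six children at each stage, appending a letter to the parent's word---is exactly the paper's approach, and your diagnosis of where the real content lies (choosing the local cyclic labeling \emph{consistently} so that cross-parent adjacencies later match the relations $\mathsf{F}_k$) is spot on.

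The one thing your proposal leaves genuinely unspecified is the anchoring rule: you say you would ``build into the inductive construction a bookkeeping invariant'' tracking how the sides of $T$ sit in the cyclic order of its children, but you never say what that invariant is or which child gets the letter $0$. Without this, the labeling of each family of six siblings has a free dihedral ambiguity, and an arbitrary choice will not produce the graph isomorphism needed for Theorem~\ref{thm-4-7}. The paper resolves this with a single concrete rule: for $t\in B^{n-1}(T_0)$ with parent $T$, the unique child of $t$ that is \emph{special with respect to $T$} (Definition~\ref{defn-2-3} and Proposition~\ref{prop-2-5}(4)) receives the label $x0$, and the remaining five children are then forced by the standard cyclic labeling. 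This single anchoring choice is what makes Lemma~\ref{lem-4-6}(3) work---it guarantees that when two adjacent level-$n$ triangles have distinct grandparents, both receive a trailing $0$ or both receive a trailing $5$, which is precisely the ``matching trailing-letter encodings on both sides'' you flagged as the technical heart. So your plan is correct; it just needs this one missing sentence to become a proof.
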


\begin{proof}
Label $B(T_0)$ with the elements in the alphabet $\alb=\set{0,1,2,3,4,5}$ cyclically so that we have the edge adjacencies $\set{1,2}, \set{3,4}, \set{5,0}$ and the vertex adjacencies $\set{0,1}, \set{2,3}, \set{4,5}$.  We call this construction a \emph{standard labeling} of the offspring of $T_0$ with the letters of the alphabet $\alb$.  We note that there are six standard labelings of the offspring of $T_0$, and each is uniquely determined by labeling any one child.

  We choose an arbitrary child of $T_0$ to be labeled 0 and construct a standard labeling of the remaining children.  Thus we have labeled the triangles of $B(T_0)$ bijectively with the words in $\alb^1$ via the standard labeling map $\Phi_1:B(T_0)\to\alb$.  For $n\geq 2$, we shall define an inductive labeling of $B^n(T_0)$ with the words in $\alb^n$ and establish the bijection $\Phi_n:B^n(T_0)\to\alb^n$.

By assumption, for each triangle $t\in B^{n-1}(T_0)$ we have an associated unique word, $x=x_1x_2 \cdots x_{n-1}$ in $\alb^{n-1}$ (ie. $\Phi_{n-1}:B^{n-1}(T_0)\to\alb^{n-1}$ is a bijection).  
We will label the offspring of $t$ with the words $x0,x1,x2,x3,x4,x5$ as follows (where $x0$ denotes the word of length $n, x_1x_2\cdots x_{n-1}0\in\alb^n$):

Let $T$ be the parent of $t$.  From above we know that exactly one child of $t$ is special with respect to $T$.  We assign this triangle the word $x0$ and label the other children of $t$ according to the standard labeling fixed by $x0$.  Therefore, to each element of $B^n(T_0)$ we have associated a word of $\alb^n$.

To show that this is an injective labeling, assume that there are two level $n$ offspring $s$ and $t$ of $T_0$ which have the same label, say $x_1x_2\cdots x_n\in\alb^n$.  By the induction assumption, there is exactly one triangle $T\in B^{n-1}(T_0)$ with the labeling $x_1x_2\cdots x_{n-1}$.  This means that $s$ and $t$ are both children of $T$ which have the same label $x_n$ by the standard labeling of the children of $T$.  Thus, $s$ and $t$ are the same triangle.  So we have $\Phi_n$ is an injective map between the finite sets $B^n(T_0)$ and $\alb^n$.  Since $|B^n(T_0)|=|\alb^n|=6^n$, we see that $\Phi_n$ is a bijection as desired.
\end{proof}

\begin{defn}\label{defn-4-4}
For any triangle $s\in B^n(T_0)$ there exists a unique chain of ancestors, $s=s_n\subset s_{n-1}\subset\cdots\subset s_1\subset s_0=T_0$ called \emph{the family tree of s}.
\end{defn}

\begin{prop}\label{prop-4-5}
If $s,t\in B^n(T)$ and $s$ is adjacent to $t$ then there is some maximal $0\leq m\leq n$ such that $s_m=t_m$ in the family trees for $s$ and $t$ (i.e. $s$ and $t$ are both level $\ell$ offspring of $s_m=t_m$.)  In particular, as k level triangles, $s_k$ and $t_k$ are equal for all $0\leq k\leq m$ and are adjacent for all $m< k\leq n$.
\end{prop}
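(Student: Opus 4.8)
The plan is to first read off the level $m$ directly from the two family trees, and then to propagate adjacency downward through the trees using the local branching property recorded in Proposition~\ref{prop-2-5}(3), which is the one nontrivial ingredient.

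First I would consider the set of levels at which the two ancestor chains agree, namely $A = \set{k : 0\leq k\leq n,\ s_k = t_k}$. Since $s_0 = t_0 = T$ we have $0\in A$, so $A$ is nonempty. The structural point is that $A$ is an initial segment of $\set{0,1,\ldots,n}$: if $s_k = t_k$ for some $k\geq 1$, then $s_{k-1}$ and $t_{k-1}$ are the parents of $s_k$ and $t_k$ respectively, and a triangle of $B^k(T_0)$ has a unique parent, so $s_{k-1} = t_{k-1}$ and hence $k-1\in A$. Thus $A = \set{0,1,\ldots,m}$ with $m = \max A$, which produces the required maximal $m$ together with the equalities $s_k = t_k$ for all $0\leq k\leq m$. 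Moreover, since $s = s_n$ and $t = t_n$ are adjacent they are distinct, so $n\notin A$ and therefore $m < n$; in particular the range $m < k\leq n$ is nonempty.

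Next I would establish the adjacency claim by downward induction on $k$. The base case is $k=n$, which is exactly the hypothesis that $s_n = s$ and $t_n = t$ are adjacent. For the inductive step, suppose $s_k$ and $t_k$ are adjacent for some $k$ with $m+1 < k\leq n$. Because $s_k\subset s_{k-1}$ and $t_k\subset t_{k-1}$ are the family-tree relations, Proposition~\ref{prop-2-5}(3) applies to the adjacent pair $s_k, t_k$ and forces either $s_{k-1} = t_{k-1}$ or $s_{k-1}$ adjacent to $t_{k-1}$. But $k-1 > m$ means $k-1\notin A$, so $s_{k-1}\neq t_{k-1}$, and therefore $s_{k-1}$ is adjacent to $t_{k-1}$. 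Iterating from $k=n$ down to $k=m+1$ yields that $s_k$ and $t_k$ are adjacent for every $m < k\leq n$, completing the argument.

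The only genuine content is the single application of Proposition~\ref{prop-2-5}(3) at each level; everything else is bookkeeping about nested family trees. Accordingly, I expect the main obstacle to be purely one of care rather than difficulty: ensuring that the hypotheses of Proposition~\ref{prop-2-5}(3) are legitimately met at each descending level (that $s_k, t_k$ are truly adjacent children of their parents, not coincident), and that the maximal common ancestor is peeled off correctly so that the alternative $s_{k-1} = t_{k-1}$ is ruled out precisely on the range $k-1 > m$ and never used to (incorrectly) claim adjacency at level $m$ itself.
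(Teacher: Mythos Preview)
Your proposal is correct and follows essentially the same route as the paper's proof: both arguments use the unique-parent property to show the agreement set is an initial segment $\{0,\ldots,m\}$, and then apply Proposition~\ref{prop-2-5}(3) in a downward induction from $k=n$, using maximality of $m$ to rule out the equality alternative at each step. Your presentation is slightly more explicit (naming the set $A$, noting $m<n$), but there is no substantive difference.
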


\begin{proof}
For any adjacent $s$ and $t$ we have that $s_0=t_0=T_0$ so this assignment is well defined.  Since each triangle has a unique parent, if $s_m=t_m$ for some $0\leq m\leq n$, then $s_{m-1}=t_{m-1}$.  Thus, we have $s_k$ and $t_k$ are equal for all $0\leq k\leq m$.  To see that $s_k$ and $t_k$ are adjacent for all $m< k\leq n$ we note that if $s_k\subset s_{k-1}$ and $t_k\subset t_{k-1}$ are adjacent, then $s_{k-1}$ and $t_{k-1}$ are either adjacent or equal.  For all $n\geq k\geq m+2$ we have $s_{k-1}$ and $t_{k-1}$ are not equal by assumption.  Also by assumption, $s_n$ and $t_n$ are adjacent.  Therefore, by induction we see that the final statement of the proposition holds.
\end{proof}

\begin{lem}\label{lem-4-6}
Let $s$ and $t$ be adjacent level $n$ offspring of $T_0$ with $s\subset S\subset S^\prime$ and $t\subset T\subset T^\prime$.  Then exactly one of the following is true of the labeling of $s$ and $t$.

\begin{enumerate}
\item If $S=T$ has the label $x\in\alb^{n-1}$, then, either $s$ has label $xi$ and $t$ has label $xj$ for some $i\in\alb, j=i+1\mod6$, or 
$t$ has label $xi$ and $s$ has label $xj$.  So the addresses of $s$ and $t$ differ only in their last letter by $1\mod6$.

\item If $S\neq T$ and $S^\prime=T^\prime$ has the labeling $x\in\alb^{n-2}$, then $s$ is labelled $xi\alpha$ and $t$ is labelled  $xj\alpha$.  If $i$ is even, then $\alpha$ is either a $1$ or $2$ (in the addresses of both $s$ and $t$) and if $i$ is odd, then $\alpha$ is either a $3$ or $4$ (in both addresses).

\item If $S\neq T$ and $S^\prime\neq T^\prime$, then the addresses for $s$ and $t$ end in the same letter, which is either a $0$ or $5$.
\end{enumerate}
\end{lem}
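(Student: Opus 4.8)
The plan is to build the whole argument on Proposition~\ref{prop-4-5}. For adjacent $s,t\in B^n(T_0)$ it produces the maximal level $m$ at which the family trees coincide: $s_k=t_k$ for $k\le m$ and $s_k,t_k$ are adjacent but unequal for $m<k\le n$, and since $s\ne t$ we have $m\le n-1$. The three cases of the lemma are exactly $m=n-1$ (so $S=s_{n-1}=t_{n-1}=T$), $m=n-2$ (so $S\ne T$ but $S'=s_{n-2}=t_{n-2}=T'$), and $m\le n-3$ (so $S\ne T$ and $S'\ne T'$); in each, $s_{m+1}$ and $t_{m+1}$ are adjacent children of the common ancestor $s_m=t_m$. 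Case~(1) is then immediate: the six children of any triangle form a hexagonal cycle in which each child meets only its two cyclic neighbours, and the standard labeling of Proposition~\ref{prop-4-3} runs $0,1,\dots,5$ around this cycle, so two adjacent children carry labels $xi$ and $xj$ with $j=i+1\bmod 6$.

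Case~(2) is the geometric core. Applying Case~(1) one level up, the adjacent siblings $S,T$ (children of $S'=T'$ with common label $x$) receive labels $xi,xj$ with $j=i+1\bmod 6$, and I would split on the parity of $i$ through the adjacency conventions built into the standard labeling. The vertex-adjacent pairs are $\{0,1\},\{2,3\},\{4,5\}$ and the edge-adjacent pairs are $\{1,2\},\{3,4\},\{5,0\}$, so $i$ even forces $S,T$ to meet along a vertex-type side $[v,b]$ while $i$ odd forces an edge-type side $[b_{\bullet\bullet},b]$, in the sense of Definition~\ref{defn-2-4}. Fixing the special child as label $0$ and running the cyclic labeling around $S$, one computes directly that the two children of $S$ containing a half of the shared side are those labeled $1,2$ in the vertex case and $3,4$ in the edge case; the identical computation on $T$, together with the fact that the matched children share an endpoint of the subdivided side, gives equal last letters $\alpha\in\{1,2\}$ (if $i$ even) and $\alpha\in\{3,4\}$ (if $i$ odd).

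For Case~(3) I would peel off the divergence level and induct along the shared side. Here $s_{m+1},t_{m+1}$ are adjacent siblings sharing a side $e$, and $s,t$ descend as adjacent boundary triangles lying along successive bisections of $e$; the first descent $s_{m+2}\subset s_{m+1}$, $t_{m+2}\subset t_{m+1}$ is itself a Case~(2) configuration. For every later descent I would show, using the self-similar recursion of the standard labeling, that the two boundary children of a triangle along $e$ are precisely those labeled $0$ (the special child, at the vertex end) and $5$ (its edge-adjacent neighbour along $e$), and that the adjacent boundary triangle across $e$ repeats this letter. Since Case~(3) is exactly the range $n\ge m+3$, the final letter of $s$, equally of $t$, is such a boundary-descent letter, hence lies in $\{0,5\}$ and agrees on the two sides.

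The step I expect to be the main obstacle is verifying, in Cases~(2) and~(3), that the labelings induced on the two sides of a shared side $e$ assign the \emph{same} final letter to triangles adjacent across $e$. The subtlety is that the physical correspondence between the symbols $0,5$ and the two ends of a sub-segment of $e$ flips as the descent proceeds. I expect to resolve this through the invariant that the endpoints of the shared side $\tau_k=s_k\cap t_k$ are common vertices of both $s_{k-1}$ and $t_{k-1}$: the label-$0$ child on each side always sits at the same physical endpoint of $\tau_k$, the one inherited as a vertex of the common grandparent, so the two labelings flip in lockstep and stay synchronized. Carrying this endpoint-incidence data through the induction, and matching it against the explicit child-by-child computation of Case~(2), is the delicate part.
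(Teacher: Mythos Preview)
Your outline is correct, and Cases~(1) and~(2) match the paper's argument closely (the paper phrases Case~(2) as a contradiction---assume $s=xi1$, $t=xj2$, compute vertices, find they share only one point---but this is equivalent to your direct computation).

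The real difference is in Case~(3). You propose to return to the divergence level $m$ and induct down the shared side, tracking the endpoint-incidence invariant to keep the two labelings synchronized. This works, but it proves more than the lemma asks: you end up showing that \emph{every} letter from position $m+3$ onward lies in $\{0,5\}$, which is precisely the content needed later in Theorem~\ref{thm-4-7}. The paper instead observes that the last letter of $s$ is determined entirely by the triple $s\subset S\subset S'$, with no reference to deeper ancestry. Concretely: the common side of $S'$ and $T'$ is some $[v_0,v_1]$; the common side of $S$ and $T$ is then half of it, say $[v_0,b_{01}]$; and the common side of $s$ and $t$ is half of \emph{that}, either $[v_0,m]$ or $[m,b_{01}]$. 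In the first case both $s$ and $t$ contain the vertex $v_0$ of their respective grandparents and lie on the grandparent boundary, so both are special and receive last letter $0$; in the second case both are the side-adjacent siblings of those special children and receive last letter $5$. No induction, and the synchronization concern you flag as the main obstacle simply does not arise: the same midpoint $m$ cuts the shared segment for both sides, so the two labelings are forced to agree by the geometry of a single bisection. Your approach effectively folds part of the Theorem~\ref{thm-4-7} induction into the lemma; the paper keeps the lemma local and defers the induction.
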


\begin{proof}
The proof of (1) is immediate from the standard labeling procedure.  We have forced the children of every triangle to observe the adjacencies $\set{i,j}$ for all $j=i+1\mod6$.

For (2), we see that $S$ and $T$ satisfy the hypothesis of (1), therefore without loss of generality we may assume $S=xi$ and $T=xj$ for some $j=i+1\mod6$.  Now $i$ is either even or odd.  If $i$ is even, then we have $S$ and $T$ are vertex adjacent siblings (this was also forced by our construction).  Thus, their common edge is of the form $u=[v_k,b]$, $k \in\set{0, 1, 2}$, where $v_k$ (resp. $b$) is a vertex (resp. the barycenter) of $S^\prime=T^\prime$.  By the standard labeling of the offspring of $S$ and $T$, we see that the labels of $s$ and $t$ end in either a $1$ or $2$.  Assume for contradiction that the last coordinate of $s$ and $t$ are different.  Thus, without loss of generality, we have that $s=xi1$ and $t=xj2$.  We see that $s=[v_k, b_{S}, b_u]$ and $t=[b, b_{T}, b_u]$  where $b_u$ is the barycenter of the segment $u$ and $b_{S}$ (resp. $b_{T}$) is the barycenter of $S$ (resp. $T$).  Since $s$ and $t$ have only one vertex in common, it is impossible for $s$ and $t$ to be adjacent.  Now we see that the address for $s$ is $xi\alpha$ and the address for $t$ is $xj\alpha$ where $\alpha\in\set{1,2}$ is the same in both addresses. If $i$ is odd, then $S$ and $T$ are side adjacent siblings and the proof follows as in the case when $i$ is even.

For (3), we know that $S^\prime$ and $T^\prime$ are adjacent, so without loss of generality, let their common side be $[v_0,v_1]$.  After BCS we see that $S$ and $T$ must have either $[v_0,b_{01}]$ or $[v_1,b_{01}]$ as their common side.  Again without loss of generality, assume that the common side of $S$ and $T$ is $[v_0, b_{01}]$.  Subdividing further we see that the common side of $s$ and $t$ must be either $[v_0,m]$ or $[m,b_{01}]$, where $m$ is the barycenter of $[v_0, b_{01}]$.  Since $[v_0,m]\subset [v_0,v_1]$, if $[v_0,m]$ is a side of $s$ and $t$ we see that $s$ and $t$ are each special with respect to their respective grandparent, $S^\prime$ and $T^\prime$.  Thus, the last letter in the addresses of both $s$ and $t$ must be $0$ by construction.  On the other hand, if $[m,b_{01}]$ is the common side of $s$ and $t$, we see that $s$ and $t$ are both side adjacent to their siblings who received a label of $0$.  Thus, the last letter in the addresses of $s$ and $t$ must be $5$ by the standard labeling construction.
\end{proof}

We are now in a position to define the desired isomorphism of graphs.

\begin{figure}[t] 

%
\begin{tikzpicture}[scale=1]
\draw
(90:4cm)--(210:4cm)--(330:4cm) -- cycle;

\foreach \a in {0,1,2}{
\draw
($(120*\a+90:4cm)$) -- ($(120*\a-90:2cm)$);
}

\draw (30:1cm)--(90:4cm)--(150:1cm)--(210:4cm)--(270:1cm)--(330:4cm) -- cycle;
\draw (-30:2cm)--(-90:2cm)--(-150:2cm)--(-210:2cm)--(-270:2cm)--(-330:2cm) -- cycle;

\foreach \a in{0,2,4}{
\draw (0,0)--($(60*\a-10:2.6cm)$);
}

\foreach \a in{1,3,5}{
\draw (0,0)--($(60*\a+10:2.6cm)$);
}

\foreach \a in{0,2,4}{

\node at ($.111*(150+120*\a:2cm)+.111*(90+120*\a:4cm) + .333*(90 + 120*\a:2cm) $) {$\a 2 $};
\node at ($.111*(150+120*\a:2cm)+.111*(90+120*\a:4cm) + .333*(90 + 120*\a:2cm)+.333*(90 + 120*\a:4cm) $) {$\a 1 $};
\node at ($.111*(150+120*\a:2cm)+.111*(90+120*\a:4cm) + .333*(90 + 120*\a:4cm) +.333*(120*\a+110:2.6cm)$) {$\a 0 $};

\node at ($.111*(150+120*\a:2cm)+.111*(90+120*\a:4cm) + .333*(150 + 120*\a:1cm) $) {$\a 3 $};
\node at ($.111*(150+120*\a:2cm)+.111*(90+120*\a:4cm) + .333*(150 + 120*\a:2cm) + .333*(150 + 120*\a:1cm) $) {$\a 4 $};
\node at ($.111*(150+120*\a:2cm)+.111*(90+120*\a:4cm) + .333*(150 + 120*\a:2cm) +.333*(120*\a+110:2.6cm)$) {$\a 5 $};
}

\foreach \a in{1,3,5}{

\node at ($.111*(-30+120*\a-60:2cm)+.111*(30+120*\a-60:4cm) + .333*(30 + 120*\a-60:2cm) $) {$\a 2$};
\node at ($.111*(-30+120*\a-60:2cm)+.111*(30+120*\a-60:4cm) + .333*(30 + 120*\a-60:2cm)+.333*(30 + 120*\a-60:4cm) $) {$\a 1 $};
\node at ($.111*(-30+120*\a-60:2cm)+.111*(30+120*\a-60:4cm) + .333*(30 + 120*\a-60:4cm) +.333*(120*\a-60-10:2.4cm)$) {$\a 0 $};

\node at ($.111*(-30+120*\a-60:2cm)+.111*(30+120*\a-60:4cm) + .333*(-30 + 120*\a-60:1cm) $) {$\a 3 $};
\node at ($.111*(-30+120*\a-60:2cm)+.111*(30+120*\a-60:4cm) + .333*(-30 + 120*\a-60:2cm) + .333*(-30 + 120*\a-60:1cm) $) {$\a 4 $};
\node at ($.111*(-30+120*\a-60:2cm)+.111*(30+120*\a-60:4cm) + .333*(-30 + 120*\a-60:2cm) +.333*(120*\a-60-10:2.4cm)$) {$\a 5 $};
}
\end{tikzpicture}
\quad
\begin{tikzpicture}[scale=1]
\draw
(90:4cm)--(210:4cm)--(330:4cm) -- cycle;

\foreach \a in {0,1,2}{
\draw
($(120*\a+90:4cm)$) -- ($(120*\a-90:2cm)$);
}

\draw (30:1cm)--(90:4cm)--(150:1cm)--(210:4cm)--(270:1cm)--(330:4cm) -- cycle;
\draw (-30:2cm)--(-90:2cm)--(-150:2cm)--(-210:2cm)--(-270:2cm)--(-330:2cm) -- cycle;

\foreach \a in{0,2,4}{
\draw (0,0)--($(60*\a-10:2.6cm)$);
}

\foreach \a in{1,3,5}{
\draw (0,0)--($(60*\a+10:2.6cm)$);
}

\foreach \a in{0,2,4}{

\draw[fill=black] ($.111*(150+60*\a:2cm)+.111*(90+60*\a:4cm) + .333*(90 + 60*\a:2cm) $) circle (2pt);
\draw[fill=black] ($.111*(150+60*\a:2cm)+.111*(90+60*\a:4cm) + .333*(90 + 60*\a:2cm)+.333*(90 + 60*\a:4cm) $) circle (2pt);
\draw[fill=black] ($.111*(150+60*\a:2cm)+.111*(90+60*\a:4cm) + .333*(90 + 60*\a:4cm) +.333*(60*\a+110:2.6cm)$) circle (2pt);

\draw[fill=black] ($.111*(150+60*\a:2cm)+.111*(90+60*\a:4cm) + .333*(150 + 60*\a:1cm) $) circle (2pt);
\draw[fill=black] ($.111*(150+60*\a:2cm)+.111*(90+60*\a:4cm) + .333*(150 + 60*\a:2cm) + .333*(150 + 60*\a:1cm) $) circle (2pt);
\draw[fill=black] ($.111*(150+60*\a:2cm)+.111*(90+60*\a:4cm) + .333*(150 + 60*\a:2cm) +.333*(60*\a+110:2.6cm)$) circle (2pt);
}

\foreach \a in{0,2,4}{
\draw[thick]
($.111*(150+60*\a:2cm)+.111*(90+60*\a:4cm) + .333*(90 + 60*\a:2cm) $)--
 ($.111*(150+60*\a:2cm)+.111*(90+60*\a:4cm) + .333*(90 + 60*\a:2cm)+.333*(90 + 60*\a:4cm) $) --
 ($.111*(150+60*\a:2cm)+.111*(90+60*\a:4cm) + .333*(90 + 60*\a:4cm) +.333*(60*\a+110:2.6cm)$)  --
($.111*(150+60*\a:2cm)+.111*(90+60*\a:4cm) + .333*(150 + 60*\a:2cm) +.333*(60*\a+110:2.6cm)$) --
 ($.111*(150+60*\a:2cm)+.111*(90+60*\a:4cm) + .333*(150 + 60*\a:2cm) + .333*(150 + 60*\a:1cm) $) -- 
 ($.111*(150+60*\a:2cm)+.111*(90+60*\a:4cm) + .333*(150 + 60*\a:1cm) $) -- cycle;

}

\foreach \a in{1,3,5}{
\draw[fill=black] ($.111*(-30+60*\a:2cm)+.111*(30+60*\a:4cm) + .333*(30 + 60*\a:2cm) $) circle (2pt);
\draw[fill=black] ($.111*(-30+60*\a:2cm)+.111*(30+60*\a:4cm) + .333*(30 + 60*\a:2cm)+.333*(30 + 60*\a:4cm) $) circle (2pt);
\draw[fill=black] ($.111*(-30+60*\a:2cm)+.111*(30+60*\a:4cm) + .333*(30 + 60*\a:4cm) +.3*(60*\a-10:2.6cm)$) circle (2pt);

\draw[fill=black] ($.111*(-30+60*\a:2cm)+.111*(30+60*\a:4cm) + .333*(-30 + 60*\a:1cm) $) circle (2pt);
\draw[fill=black] ($.111*(-30+60*\a:2cm)+.111*(30+60*\a:4cm) + .333*(-30 + 60*\a:2cm) + .333*(-30 + 60*\a:1cm) $) circle (2pt);
\draw[fill=black] ($.111*(-30+60*\a:2cm)+.111*(30+60*\a:4cm) + .333*(-30 + 60*\a:2cm) +.3*(60*\a-10:2.6cm)$) circle (2pt);
}

\foreach \a in{1,3,5}{
\draw[thick] ($.111*(-30+60*\a:2cm)+.111*(30+60*\a:4cm) + .333*(30 + 60*\a:2cm) $)--
 ($.111*(-30+60*\a:2cm)+.111*(30+60*\a:4cm) + .333*(30 + 60*\a:2cm)+.333*(30 + 60*\a:4cm) $)--
($.111*(-30+60*\a:2cm)+.111*(30+60*\a:4cm) + .333*(30 + 60*\a:4cm) +.333*(60*\a-10:2.4cm)$) --
($.111*(-30+60*\a:2cm)+.111*(30+60*\a:4cm) + .333*(-30 + 60*\a:2cm) +.333*(60*\a-10:2.4cm)$)-- 
 ($.111*(-30+60*\a:2cm)+.111*(30+60*\a:4cm) + .333*(-30 + 60*\a:2cm) + .333*(-30 + 60*\a:1cm) $)--
 ($.111*(-30+60*\a:2cm)+.111*(30+60*\a:4cm) + .333*(-30 + 60*\a:1cm) $)  -- cycle;
}

\foreach \a in {0,2,4}{
\draw[thick] ($.111*(150+60*\a:2cm)+.111*(90+60*\a:4cm) + .333*(90 + 60*\a:2cm) $) --
($.111*(-30+60*\a+60:2cm)+.111*(30+60*\a+60:4cm) + .333*(30 + 60*\a+60:2cm) $);
\draw[thick] ($.111*(150+60*\a:2cm)+.111*(90+60*\a:4cm) + .333*(90 + 60*\a:2cm)+.333*(90 + 60*\a:4cm) $)--($.111*(-30+60*\a+60:2cm)+.111*(30+60*\a+60:4cm) + .333*(30 + 60*\a+60:2cm)+.333*(30 + 60*\a+60:4cm) $) ;

\draw[thick] ($.111*(150+60*\a:2cm)+.111*(90+60*\a:4cm) + .333*(150 + 60*\a:1cm) $)--($.111*(-30+60*\a+180:2cm)+.111*(30+60*\a+180:4cm) + .333*(-30 + 60*\a+180:1cm) $) ;
\draw[thick] ($.111*(150+60*\a:2cm)+.111*(90+60*\a:4cm) + .333*(150 + 60*\a:2cm) + .333*(150 + 60*\a:1cm) $) -- ($.111*(-30+60*\a+180:2cm)+.111*(30+60*\a+180:4cm) + .333*(-30 + 60*\a+180:2cm) + .333*(-30 + 60*\a+180:1cm) $);
}
\end{tikzpicture}
\caption{The labeling $B^2(T_0)$ (left) and the graph isomorphism to $\face_2$}
\label{labeledbarycentric}
\end{figure}
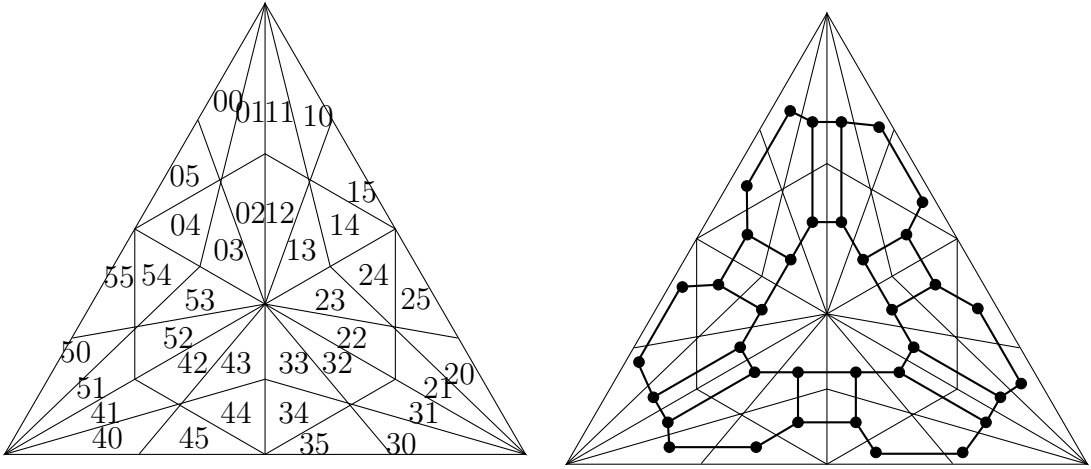

\begin{thm}\label{thm-4-7}
Let $B^n(T_0)$ be the vertex set of a graph where two level $n$ offspring of $T_0$ are connected by an edge if and only if they are adjacent as level $n$ offspring of $T_0$.  This graph is isomorphic to $\face_n$ with the isomorphism given by Proposition~\ref{prop-4-3}.
\end{thm}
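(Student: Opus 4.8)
The plan is to establish the graph isomorphism by showing that the bijection $\Phi_n:B^n(T_0)\to\alb^n$ from Proposition~\ref{prop-4-3} carries the adjacency relation on $B^n(T_0)$ exactly onto the edge relation $\edge_n$ of $\face_n$. Since $\Phi_n$ is already known to be a bijection on vertices, it suffices to prove the biconditional: two level $n$ offspring $s,t$ are adjacent in $B^n(T_0)$ if and only if their labels $\Phi_n(s),\Phi_n(t)$ are connected in $\face_n$. I would prove each direction by matching the three combinatorial cases of adjacency against the three types of edges collected in the relations $\mathsf{F}_1,\dots,\mathsf{F}_n$.

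For the forward direction, suppose $s$ and $t$ are adjacent, with family trees as in Definition~\ref{defn-4-4}. By Proposition~\ref{prop-4-5} there is a maximal $m$ with $s_m=t_m$, and $s_k,t_k$ are adjacent for all $m<k\leq n$; adjacent triangles always have parents that are equal or adjacent, so in fact $m=n-1$, $m=n-2$, or the ancestors diverge earlier. These three possibilities are precisely the trichotomy of Lemma~\ref{lem-4-6}. Invoking that lemma, in case~(1) the labels are $xi$ and $xj$ with $j=i+1\bmod 6$, which is exactly an edge of $\mathsf{F}_1$; in case~(2) the labels are $xi\alpha$ and $xj\alpha$ with $\alpha\in\set{1,2}$ when $i$ is even and $\alpha\in\set{3,4}$ when $i$ is odd, matching $\mathsf{F}_k$ for the appropriate $k$; and in case~(3) the divergence propagates through a run of triangles that are each special or side-adjacent to a $0$-child, so the labels agree in the last letter (a $0$ or a $5$) and, peeling back along the family tree and reapplying the same case analysis at each level, one sees the words have the form $xi\alpha v, xj\alpha v$ with $v\in\set{0,5}^{n-k}$, again an edge of $\mathsf{F}_k$. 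Thus every adjacency maps to an edge of $\face_n$.

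For the converse I would argue that $\Phi_n$ does not create spurious edges: since $|\edge_n|=2^{n-1}(3^{n+1}-3)$ by Proposition~\ref{prop-4-1}(3), and the number of adjacencies among $B^n(T_0)$ is the identical quantity $2^{n-1}(3^{n+1}-3)$ by Proposition~\ref{prop-2-5}(2), a cardinality count closes the argument. Having shown $\Phi_n$ maps the set of adjacencies injectively into $\edge_n$, and both sets having equal finite cardinality, the image is all of $\edge_n$ and the map is an edge-preserving bijection, i.e.\ a graph isomorphism. The main obstacle I anticipate is case~(3) of the forward direction: tracking how the divergence of the family trees at level $m=n-k$ forces the common trailing letters to lie in $\set{0,5}$ and confirming that the $\set{0,5}^{n-k}$ tail in $\mathsf{F}_k$ is reproduced exactly requires a careful induction down the shared portion of the trees, using repeatedly that exactly one child of each triangle is special (Proposition~\ref{prop-2-5}(4)) and the side-adjacency bookkeeping of the standard labeling. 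Everything else reduces to reading off Lemma~\ref{lem-4-6} and the cardinality identity.
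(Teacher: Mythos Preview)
Your proposal is correct and follows essentially the same route as the paper: the forward direction splits according to the maximal level $m$ with $s_m=t_m$ and applies the three cases of Lemma~\ref{lem-4-6} (with the $m\leq n-3$ case handled by iterating case~(3) to produce the common $\{0,5\}$-tail), and the converse is closed by the edge-count identity between Proposition~\ref{prop-2-5}(2) and Proposition~\ref{prop-4-1}(3). The only cosmetic difference is that you flag Proposition~\ref{prop-2-5}(4) explicitly in the case~(3) induction, whereas the paper absorbs that bookkeeping into Lemma~\ref{lem-4-6}.
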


\begin{proof}
We already have that $\Phi_n:B^n(T_0)\to\alb^n$ is a bijection between the vertex sets of the two graphs.  It remains to show that $\Phi_n$ preserves the adjacency structure of the two graphs.  Most of the hard work was done in Lemma~\ref{lem-4-6}.  We must now show in particular that if $s,t\in B^n(T_0)$ are adjacent triangles, then $\Phi_n(s)$ and $\Phi_n(t)$ satisfy an edge relation in $\mathsf{F}_k$ for some $1\leq k\leq n$.

Let $s=s_n\subset s_{n-1}\subset\cdots\subset s_1\subset s_0=T_0$ and $t=t_n\subset t_{n-1}\subset\cdots\subset t_1\subset t_0=T_0$ be the family trees for $s$ and $t$, respectively, and let $m$ be maximal with respect to $s_m=t_m$.  Assume $\Phi_m(s_m)=\Phi_m(t_m)=x_1x_2\cdots x_m\in\alb^m$.  First suppose that $m=n-1$ and $\Phi_{n-1}(s_{n-1})=\Phi_{n-1}(t_{n-1})=x_1x_2\cdots x_{n-1}\in\alb^{n-1}$. Then, by part (1) of Lemma~\ref{lem-4-6}, without loss of generality we have $\Phi_n(s)=x_1x_2\cdots x_{n-1}i$ and $\Phi_n(t)=x_1x_2\cdots x_{n-1}j$ where $j=i+1\mod6$.  Thus, $\set{\Phi_n(s),\Phi_n(t))}\in \mathsf{F}_1$, as desired.

Now suppose that $m=n-2$ and $\Phi_{n-2}(s_{n-2})=\Phi_{n-2}(s_{n-2})=x\in\alb^{n-2}$.  We apply part (1) of Lemma~\ref{lem-4-6} to $s_{n-1}$ and $t_{n-1}$ to obtain $\Phi_{n-1}(s_{n-1})=xi$ and $\Phi_{n-1}(t_{n-1})=xj$, where $j=i+1\mod6$ and $i$ is either even or odd.  We apply part (2) of Lemma~\ref{lem-4-6} to $s_n$ and $t_n$ to see that $\Phi_n(s)=x_1x_2\cdots x_{n-2}i\alpha$ and $\Phi_n(t)=x_1x_2\cdots x_{n-2}j\alpha$, where $j=i+1\mod6$, $\alpha$ is the same in the addresses of both $s$ and $t$, and $\alpha$ as above.  Thus, $\set{\Phi_n(s),\Phi_n(t))}\in \mathsf{F}_n$, as desired.

Finally, suppose that $m\leq n-3$.  From parts (1) and (2) of Lemma~\ref{lem-4-6}, we know that $\Phi_{m+2}(s_{m+2})=xi\alpha$ and $\Phi_{m+2}(t_{m+2})=xj\alpha$, where $\Phi_m(s_m)=\Phi_m(t_m)=x\in\alb^m$, $j=i+1\mod6$, and $\alpha$ is as above.  For $3\leq k\leq n-m$, we see that $s_{m+k}$ and $t_{m+k}$ satisfy the conditions of part (3) of Lemma~\ref{lem-4-6}. Thus, the last label in the addresses of both $s_{m+k}$ and $t_{m+k}$ is either $0$ or $5$.  Inductively we have $\Phi_n(s)=xi\alpha v$ and $\Phi_n(t)=xj\alpha v$, where $j=i+1\mod6$, $\alpha$ is as above, and $v\in\set{0,5}^{n-m-2}$.  Thus, $\set{\Phi_n(s),\Phi_n(t))}\in \mathsf{F}_{m+2}$, as desired.

We now see that to each edge in $B^n(T_0)$ there corresponds an edge in $\edge_n$.  We verify from Propositions~\ref{prop-2-5} and~\ref{prop-4-1} that the number of edges in each graph is the same.  Therefore, we have an isomorphism of graphs given by $\Phi_n$.
\end{proof}
\section{Graph Distances\red{\ (Hugo et al.)}}\label{sec:graph_distances}

Each $G_{n}$ inherits a natural planar embedding from $B^{n}(T_{0})$, see figure \ref{labeledbarycentric}. Two interesting features of these embeddings are their central hole and outer border. Figure \ref{level4graph}, generated by the computer program Mathematica, shows a deformed  embedding, accentuating these features. In this section we use the isomorphism established in Theorem~\ref{thm-4-7} to derive formulas for the length of paths that follow the outer border and the central hole. We call these paths outer and inner circumference paths.

\begin{figure}[ht]
\begin{center}
\subfloat{\includegraphics[scale=.35]{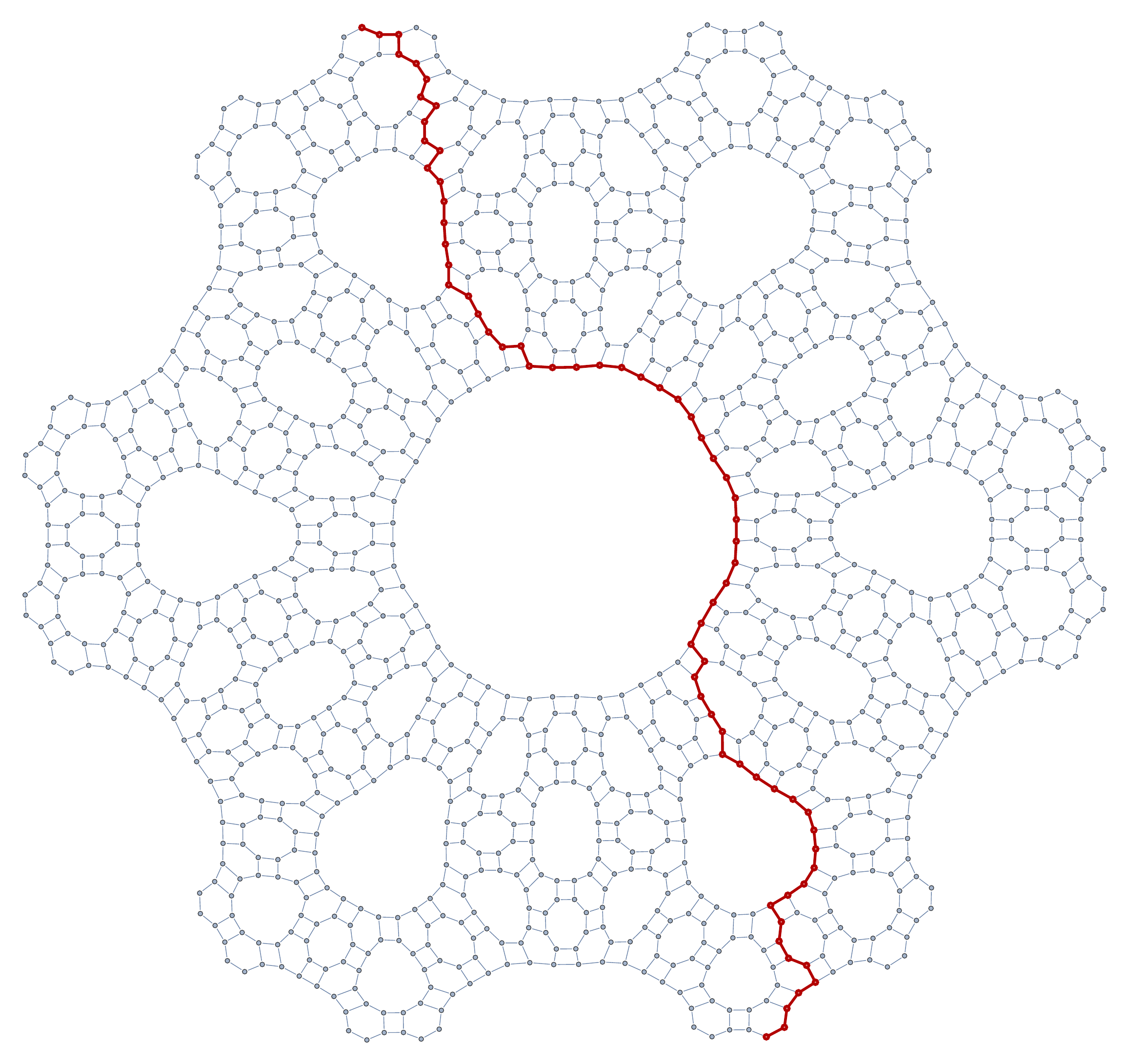}}
\subfloat{\includegraphics[scale=.35]{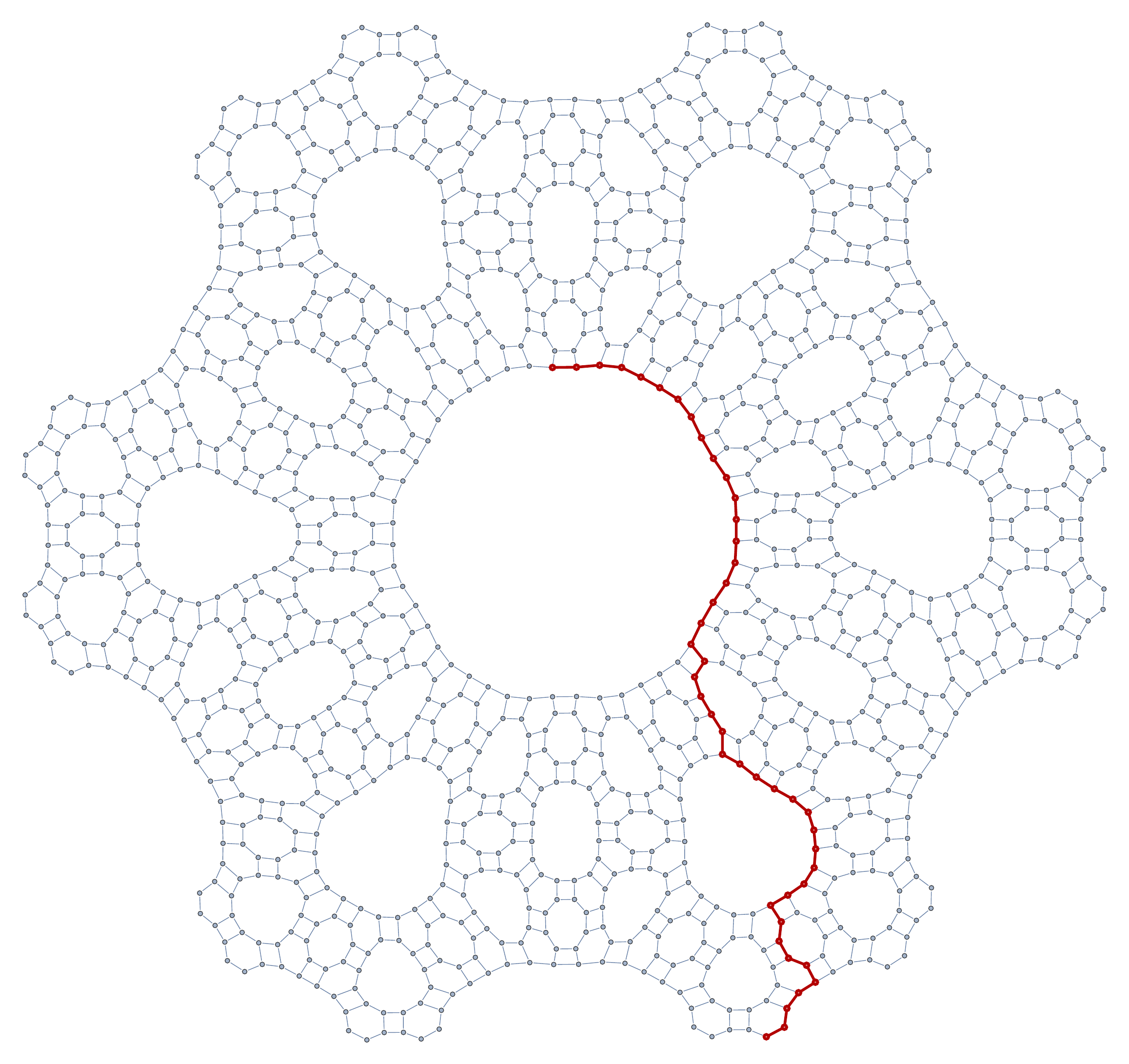}}
\end{center}
\caption{Graph $G_{4}$, with  typical radius (left) and diameter (right) paths highlighted.}
\label{level4graph}
\end{figure}

\begin{defn}\label{defn-5-1}
Let $Y_n$ denote the collection of level $n$ boundary triangles of $T_0$ defined in Definition~\ref{defn-2-1}.  Define the \emph{extended level $n$ boundary triangles of $T_0$} as the set of all $t\in B^n(T_0)$ such that $t$ intersects $\partial T_0$ in exactly one point.  We denote the collection of extended level $n$ boundary triangles of $T_0$ by $Z_n$.  Define the \emph{outer circumference path} of $T_0$, $Out_n$, to be the cycle that crosses each triangle in $Y_n$ and $Z_n$.  Define the \emph{inner circumference path} of $T_0$, $Inn_n$, to be the cycle that crosses each triangle containing the barycenter $b$ of $T_0$ as a vertex.
\end{defn}

\begin{prop}\label{prop-5-2}We have the following formulas for the length of $Out_{n}$ and $Inn_{n}$:
\begin{enumerate}
\item $|Out_{n}|=3n\cdot2^n$
\item $|Inn_{n}|=3\cdot2^n$
\end{enumerate}
\end{prop}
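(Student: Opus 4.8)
The plan is to compute both lengths by counting the triangles each cycle crosses, since for a simple cycle in $G_n$ the length equals the number of vertices (triangles) it visits. For the inner path this is immediate: $Inn_n$ crosses exactly those level $n$ triangles having the barycenter $b$ of $T_0$ as a vertex. Because $b$ is a \emph{corner} of every triangle that contains it, subdividing such a triangle produces exactly two offspring that retain $b$ as a vertex; hence the number of triangles meeting at $b$ doubles with each subdivision. Starting from the six children of $T_0$ at level $1$, this gives $6\cdot 2^{n-1}=3\cdot 2^n$ such triangles, and since consecutive triangles in the angular fan around $b$ share the edge from $b$ to a common neighbor, they are adjacent in $G_n$ and close up into a single cycle. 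Thus $|Inn_n|=3\cdot 2^n$.

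For the outer path I would write $|Out_n|=|Y_n|+|Z_n|$ (the two sets are disjoint, since $Y_n$ triangles meet $\partial T_0$ in a whole edge while $Z_n$ triangles meet it in a single point). By Proposition~\ref{prop-2-5}(1) we already have $|Y_n|=6\cdot 2^{n-1}=3\cdot 2^n$, so it remains to count $Z_n$. Each triangle of $Z_n$ meets $\partial T_0$ in exactly one point, which must be one of its vertices and hence a subdivision vertex on $\partial T_0$. Conversely, fix a boundary vertex $p$ and let $D_n(p)$ be the number of level $n$ triangles having $p$ as a vertex. Exactly two of these carry a boundary edge, namely the two sub-segments of $\partial T_0$ abutting $p$, and therefore lie in $Y_n$; the remaining $D_n(p)-2$ fan into the interior and meet $\partial T_0$ only at $p$, hence lie in $Z_n$. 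This gives $|Z_n|=\sum_{p}\bigl(D_n(p)-2\bigr)$, summed over boundary vertices.

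To evaluate the sum I would track $D_n(p)$ by the level $\ell$ at which $p$ first appears on $\partial T_0$. A corner of $T_0$ ($\ell=0$) satisfies $D_n(p)=2^n$, while a midpoint created at level $\ell\ge 1$ satisfies $D_n(p)=2^{\,n-\ell+1}$: in both cases two triangles meet at $p$ when it is born, and this count doubles at every later subdivision because $p$ is a corner of each triangle containing it. At level $\ell\ge 1$ each side of $T_0$ acquires $2^{\ell-1}$ new boundary midpoints, i.e. $3\cdot 2^{\ell-1}$ in all, and there are $3$ corners. Summing,
\[
\sum_p D_n(p)=3\cdot 2^n+\sum_{\ell=1}^n 3\cdot 2^{\ell-1}\cdot 2^{\,n-\ell+1}=3\cdot 2^n+3n\cdot 2^n=3(n+1)2^n,
\]
while the total number of boundary vertices is $3+\sum_{\ell=1}^n 3\cdot 2^{\ell-1}=3\cdot 2^n$. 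Hence $|Z_n|=3(n+1)2^n-2\cdot 3\cdot 2^n=3(n-1)2^n$, and $|Out_n|=|Y_n|+|Z_n|=3\cdot 2^n+3(n-1)2^n=3n\cdot 2^n$, as claimed.

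The arithmetic above is not the difficulty. The step needing genuine care — which I would establish by induction on $n$ using the self-similar structure of Section~\ref{sec:bcs} and the boundary description in Proposition~\ref{prop-2-5} — is the geometric classification behind the count: that a $Z_n$ triangle meets $\partial T_0$ in exactly one \emph{vertex} (never in two isolated vertices, and never along part of an edge), and that precisely two of the triangles around each boundary vertex carry a boundary edge. One must also confirm that $Y_n\cup Z_n$ assembles into a single closed cycle rather than several, which follows because consecutive boundary-touching triangles, ordered along $\partial T_0$, are pairwise adjacent in $G_n$.
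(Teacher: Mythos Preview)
Your argument is correct. For $Inn_n$ you and the paper do the same thing: each triangle containing $b$ has $b$ as a corner, so exactly two of its children retain $b$, and the count doubles from the initial value $6$.

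For $Out_n$ the approaches genuinely differ. The paper argues recursively: subdividing a triangle in $Y_n$ yields two children in $Y_{n+1}$ and two in $Z_{n+1}$, while subdividing a triangle in $Z_n$ yields two children in $Z_{n+1}$ (and none in $Y_{n+1}$). This gives $|Z_{n+1}|=2|Y_n|+2|Z_n|$ with $|Z_1|=0$, which solves to $|Z_n|=3(n-1)2^n$. Your approach is static rather than dynamic: you fix the level $n$, fan the triangles around each boundary vertex $p$, note that exactly two of the $D_n(p)$ triangles at $p$ carry a boundary edge, and sum $D_n(p)-2$ over all boundary vertices using the birth-level formula $D_n(p)=2^{\,n-\ell+1}$. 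The bookkeeping is a little longer, but your method has the virtue of making explicit the geometric structure of the outer border (the fan decomposition), and it would adapt more readily if one wanted to count, say, the triangles touching a single side of $T_0$ or a prescribed collection of boundary vertices. The paper's recursion is shorter and more in the self-similar spirit of Section~\ref{sec:bcs}. Both routes rely on the same underlying geometric fact you flag at the end --- that a level $n$ triangle meets $\partial T_0$ either in a full edge, in a single vertex, or not at all --- which is what guarantees no double-counting in your sum and what makes the paper's case split into $Y$ and $Z$ exhaustive.
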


\begin{proof}
We count inductively the length of the outer circumference of $T_0$ and inner circumference of $T_0$.  The statements about $\face_n$ follow from the isomorphism.  When we subdivide any triangle, exactly two of its children contain any one vertex of the parent.  In particular, we have that for each level $n$ triangle in $Z_n$ exactly two of its children are in $Z_{n+1}$.  Similarly, for each level $n$ triangle containing $b$ exactly two of its children contain $b$.  Since $|Inn_1|=6$, it follows inductively that $|Inn_n|=6\cdot2^{n-1}=3\cdot2^n$.

Note that when we subdivide any triangle in $Y_n$, exactly two of its children are in $Y_{n+1}$ and exactly two more of its children are in $Z_{n+1}$.  From Proposition~\ref{prop-2-5}, we know that $|Y_n|=6\cdot2^{n-1}=3\cdot2^n$, thus
\[
|Z_n| = 2|Y_{n-1}|+2|Z_{n-1}| = \sum_{j=1}^{n-1} 2^j|Y_{j-1}| = (n-1)3\cdot2^n. 
\]
So $|Out_n|=|Y_n|+|Z_n|=3n\cdot2^n$.
\end{proof}

Next we recall some standard definitions regarding distance on an arbitrary finite graph $G=(V,E)$, see \cite{Diestel,Foulds} for references. 

\begin{defn}\label{defn-5-3}
The \emph{graph} or \emph{geodesic distance} $d(x,y)$ between two vertices $x,y\in V$ is the length of the shortest edge path connecting them. The \emph{eccentricity} $\mathcal{E}(x)$ of a vertex $x\in V$ is defined as $\mathcal{E}(x):=\max\{d(x,y):y\in V\}$. Now the \emph{diameter} and \emph{radius} of a finite graph G can be defined as $\mathcal{D}(G):=\max\{\mathcal{E}(x):x\in V\}$ and $\mathcal{R}(G):=\min\{\mathcal{E}(x):x\in V\}$, respectively. A vertex $x\in V$ is called \emph{central} if $\mathcal{E}(x)=\mathcal{R}(G)$ and \emph{peripheral} if $\mathcal{E}(x)=\mathcal{D}(G)$. If the length of the shortest edge path connecting a central vertex to another vertex equals the radius of a graph, then we call that path a \emph{radius path}. A \emph{diameter path} is defined analogously. Note that a radius or diameter path connecting two vertices need not be unique. 
\end{defn}

Using Mathematica's graph utilities package we were able to compute the radius and diameter of $G_{n}$ for $1\leq n \leq9$ and to plot radius and diameter paths. See figure 5.1 for some examples. On observing these paths we noticed that the typical radius path of $G_{n+1}$ is composed of a partial path along $Inn_{n+1}$ and an approximate diameter path of $G_{n}$. This suggests the equation
\begin{equation}
\mathcal{R}(G_{n+1})=|PInn_{n+1}|+\mathcal{D}(G_n)-Dadj_{n}
\end{equation}
where $|PInn_{n+1}|$ is the length of the partial path along $Inn_{n+1}$ and $Dadj_{n}$ is the adjustment needed for agreement with our data. Similarly, the well known fact that $\mathcal{R}(G)\leq \mathcal{D}(G) \leq 2\mathcal{R}(G)$ for any finite graph $G$ suggests the equation
\begin{equation}
\mathcal{D}(G_{n})=2\mathcal{R}(G_n)-Radj_{n}
\end{equation}
where $Radj_{n}$ is the adjustment needed for agreement with our data. Both adjustments and $|PInn_{n+1}|$ appear to obey simple recurrence relations, see tables 5.1 and 5.2. Solving these relations using the standard techniques gives the likely formulas 
\begin{equation}
|PInn_{n+1}|=2^{n+1}+1,
\end{equation}
\begin{equation}
Dadj_{n}=\frac{1}{6}(2^{n}-(-1)^{n}-3),
\end{equation}
\begin{equation}
Radj_{n}=\frac{1}{6}(7\cdot2^{n}+2(-1)^{n}+6).
\end{equation}
Combining equations (5.1) through (5.5) yields the recurrence relation
\begin{equation}
\mathcal{R}(G_{n+1})=2\mathcal{R}(G_{n})+\frac{1}{6}(2^{n+2}-(-1)^{n}+3).
\end{equation}
Solving this recurrence relation results in explicit formulas for $\mathcal{R}(G_{n})$ and $\mathcal{D}(G_{n})$.

\begin{conj}\label{conj-5.4} We conjecture the following formulas for the radius and diameter of $G_{n}$:
\begin{enumerate}
\item $\mathcal{R}(G_{n})=\frac{1}{18}(2^{n+1}(13+3n)+(-1)^{n}-9)$  
\item $\mathcal{D}(G_{n})=\frac{1}{9}(2^{n-1}(31+12n)+2(-1)^{n-1}-18)$
\end{enumerate}
\end{conj}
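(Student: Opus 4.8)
The plan is to regard Conjecture~\ref{conj-5.4} as the explicit solution of the linear recurrence (5.6) supplemented by the relation (5.2), and to split the work into a routine algebraic part — solving the recurrence — and the genuinely hard part, which is upgrading the data-fitted relations (5.1)--(5.5) to theorems. As the statement stands it rests on the geometric observations behind (5.1) and (5.2); the entire gap between conjecture and theorem is the rigorous justification of those relations together with the correction formulas (5.3)--(5.5).

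First I would dispose of the algebra. Granting (5.6), the radius $\mathcal{R}(G_n)$ satisfies a first-order linear inhomogeneous recurrence with constant coefficient $2$, whose homogeneous solution is $C\cdot 2^n$. Because the forcing term $\tfrac16\cdot 2^{n+2}=\tfrac23\cdot 2^n$ resonates with the homogeneous part, I would seek a particular solution of the form $a\,n\,2^n+b(-1)^n+c$ and match coefficients to obtain $a=\tfrac13$, $b=\tfrac1{18}$, $c=-\tfrac12$. The base value $\mathcal{R}(G_1)=3$ then fixes $C=\tfrac{13}{9}$ and produces the stated formula for $\mathcal{R}(G_n)$; here $\mathcal{R}(G_1)=3$ holds because $G_1$ is the $6$-cycle, as Proposition~\ref{prop-4-2} shows that for $n=1$ every vertex lies in $\mathsf{W}_1$ and has degree two. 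The formula for $\mathcal{D}(G_n)$ then follows by inserting this expression and the formula~(5.5) for $Radj_n$ into (5.2) and simplifying with $(-1)^{n-1}=-(-1)^n$; this is a one-line computation, and I would also check the small cases $n\le 3$ against the Mathematica output to exclude boundary effects in the recurrence.

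The substantive task is to prove (5.1)--(5.5) instead of reading them off Tables 5.1 and 5.2. I would try to derive the exact recurrence (5.6) directly from the self-similar block structure of $G_{n+1}$, bypassing the intermediate quantities where possible. The decisive inputs are the isomorphism $\Phi_n$ of Theorem~\ref{thm-4-7}, which lets me treat edge paths as chains of adjacent barycentric triangles; the family-tree structure of Definition~\ref{defn-4-4} and Proposition~\ref{prop-4-5}, which identifies adjacency of two triangles with the level at which their family trees diverge and thereby exhibits $G_{n+1}$ as six copies of $G_n$ (the first-letter cells $\shift_i(\hex)$) glued cyclically along junction sets; and Proposition~\ref{prop-5-2}, which gives the exact lengths $|Inn_n|=3\cdot 2^n$ and $|Out_n|=3n\cdot 2^n$ governing travel around the central hole and along the outer border. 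Using Proposition~\ref{cantor ramification} any path between vertices in non-adjacent cells is forced to cross the intervening junctions, and I would bound the cost of traversing a single cell above by $\mathcal{D}(G_n)$ and below by a quantity comparable to $\mathcal{R}(G_n)$, which is what should drive (5.1) and the factor $2$ in (5.2).

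The main obstacle will be controlling geodesics rigorously. Unlike trees or \pcf fractals with finite cell boundaries, here neighbouring cells meet along Cantor sets (Proposition~\ref{cantor ramification}), so there are infinitely many gateways between cells and an optimal path may in principle enter and leave a cell through different junction points, or zig-zag across several cells. The crux is to prove that an optimal radius path of $G_{n+1}$ really does decompose as a partial inner-circumference segment of length $|PInn_{n+1}|=2^{n+1}+1$ followed by a near-optimal traversal of a single cell, rather than some cheaper crossing path; ruling out crossing paths should require a cut or isoperimetric estimate exploiting the sparsity of the junction edges. Once the positions of the central and peripheral vertices relative to the corners and the hole are pinned down — most plausibly by a simultaneous induction that tracks where $\mathcal{E}$ is extremized — the correction terms $Dadj_n$ and $Radj_n$ acquire exact combinatorial meaning, (5.6) follows, and the closed forms in Conjecture~\ref{conj-5.4} drop out by the algebra above.
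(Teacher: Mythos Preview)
The statement is labelled a conjecture in the paper and is not proved there; the paper's ``derivation'' is precisely the heuristic route you describe --- fit the relations (5.1)--(5.5) to the Mathematica output in Tables~5.1 and~5.2, combine them into the recurrence (5.6), and solve. Your algebraic solution of (5.6) is correct and reproduces the paper's closed forms, and you have accurately identified that the entire gap between conjecture and theorem is a rigorous justification of (5.1)--(5.5), which neither you nor the paper supplies. Your sketch for attacking that gap via the cell decomposition of Theorem~\ref{thm-4-7} and the Cantor-set junctions of Proposition~\ref{cantor ramification} is a reasonable outline, but as you yourself note the control of geodesics across infinitely many gateways is genuinely delicate; the paper offers no further progress on this point either.
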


\begin{rem}\label{conj-5.5} If conjecture 5.4 is true then the following are easily seen to hold:
\begin{enumerate}
\item As $n\to\infty$, $|Inn_{n}|=o(\mathcal{D}(G_{n}))$ yet $|Out_{n}|=O(\mathcal{D}(G_{n}))$
\item $\displaystyle\lim_{n\to\infty}\frac{|Out_{n}|}{\mathcal{D}(G_{n})}=\frac{9}{2}$
\item $\mathcal{R}(G_{n+2})=4\mathcal{R}(G_{n+1})-4\mathcal{R}(G_{n})-\frac{1}{2}(1-(-1)^{n})$
\item $\mathcal{D}(G_{n+2})=4\mathcal{D}(G_{n+1})-4\mathcal{D}(G_{n})-2(1+(-1)^{n})$
\end{enumerate}
\end{rem}

\begin{table}[ht]
\begin{center}
\begin{tabular}{|c|*{7}{c}|}
\hline
$n$ & $\mathcal{R}(G_{n+1})$ & = & $|PInn_{n+1}|$ & + & $\mathcal{D}(G_n)$ & - & $Dadj_{n}$ \\
\hline
1 & 8 & = & 5 & + & 3 & - & 0 \\
2 & 19 & = & 9 & + & 10 & - & 0 \\
3 & 44 & = & 17 & + & 28 & - & 1 \\
4 & 99 & = & 33 & + & 68 & - & 2 \\
5 & 220 & = & 65 & + & 160 & - & 5 \\
6 & 483 & = & 129 & + & 364 & - & 10 \\
7 & 1052 & = & 257 & + & 816 & - & 21 \\
8 & 2275 & = & 513 & + & 1804 & - & 42 \\
\vdots & \vdots & = & \vdots & + & \vdots & - & \vdots \\
$n$ & $\mathcal{R}(G_{n+1})$ & = & $2^{n+1}+1$ & + & $\mathcal{D}(G_n)$ & - & $\frac{1}{6}(2^{n}-(-1)^{n}-3)$ \\
\hline
\end{tabular}
\end{center}
\caption{Observed relation between $\mathcal{R}(G_{n+1})$, $|PInn_{n+1}|$, and $\mathcal{D}(G_n)$.}
\end{table}

\begin{table}[ht]
\begin{center}
\begin{tabular}{|c|*{5}{c}|}
\hline
$n$ & $\mathcal{D}(G_n)$ & = & $2\mathcal{R}(G_{n})$ & - & $Radj_{n}$ \\
\hline
1 & 3 & = & 6 & - & 3 \\
2 & 10 & = & 16 & - & 6 \\
3 & 28 & = & 38 & - & 10 \\
4 & 68 & = & 88 & - & 20 \\
5 & 160 & = & 198 & - & 38 \\
6 & 364 & = & 440 & - & 76 \\
7 & 816 & = & 966 & - & 150 \\
8 & 1804 & = & 2104 & - & 300 \\
9 & 3952 & = & 4550 & - & 598 \\
\vdots & \vdots & = & \vdots & - & \vdots \\
$n$ & $\mathcal{D}(G_n)$ & = & $2\mathcal{R}(G_{n})$ & - & $\frac{1}{6}(7\cdot2^{n}+2(-1)^{n}+6)$ \\
\hline
\end{tabular}
\end{center}
\caption{Observed relation between $\mathcal{D}(G_n)$ and $\mathcal{R}(G_{n})$.}
\end{table}

\section{Numerical Data, Concluding remarks and conjectures}\label{sec:numerics}
\red{\ (Matt, et al.)}

From Theorem \ref{thm-4-7}, the level $n$  hexacarpet $G_n$ and the $n$th Barycentric offspring of a triangle are isomorphic.  The following results assume that we are working with vertices from $G_n$, but without loss of generality, we can assume they are cells of $B^n(T_0)$.

\begin{figure}[!]
\begin{center}
\includegraphics[scale=.9]{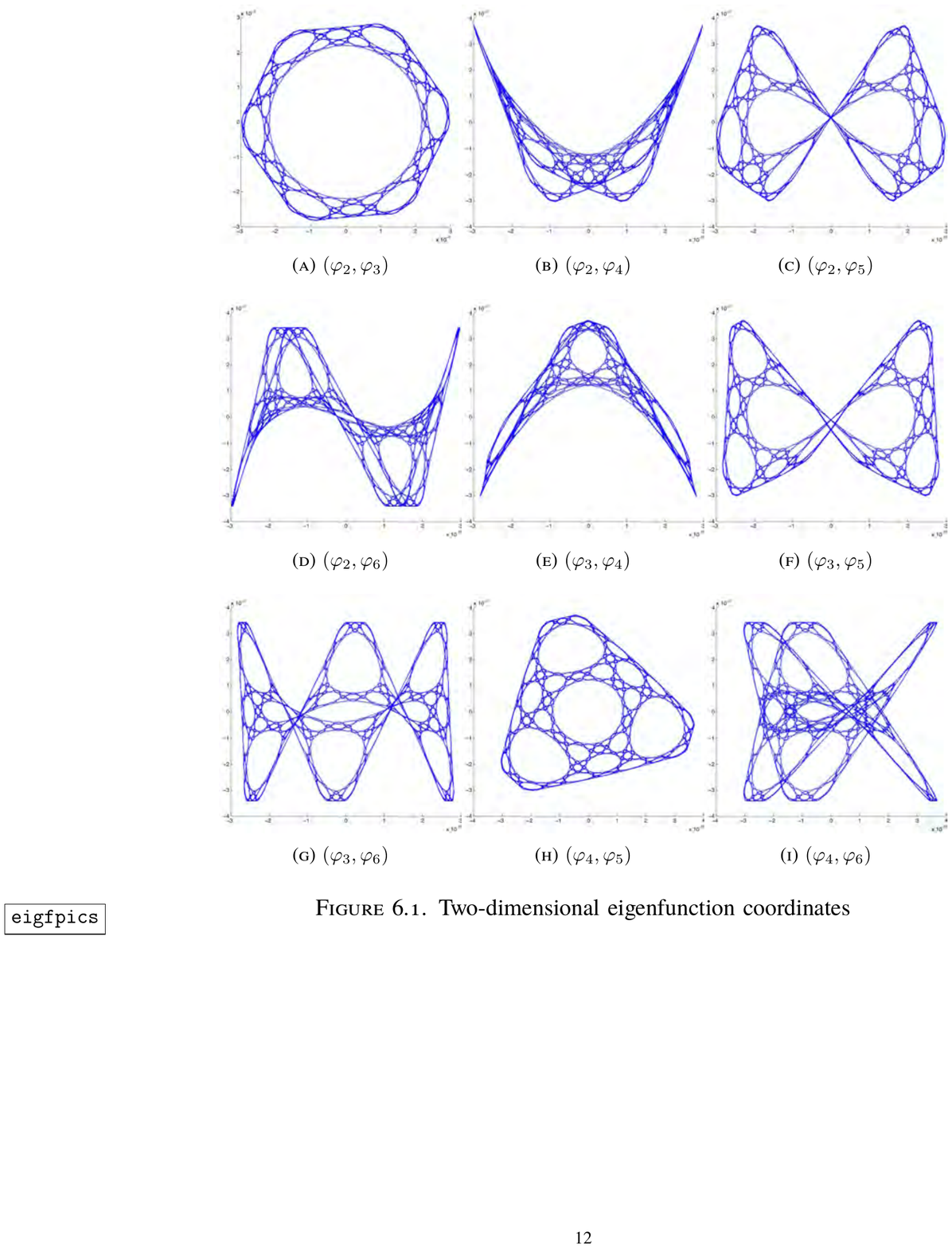}
\end{center}\caption{Two-dimensional eigenfunction coordinates $\big(\phi_i(x),\phi_j(x)\big)$ of the hexacarpet.}\label{eigfpics}
\end{figure}

\begin{figure}[!]
\begin{center}
\includegraphics[scale=.9]{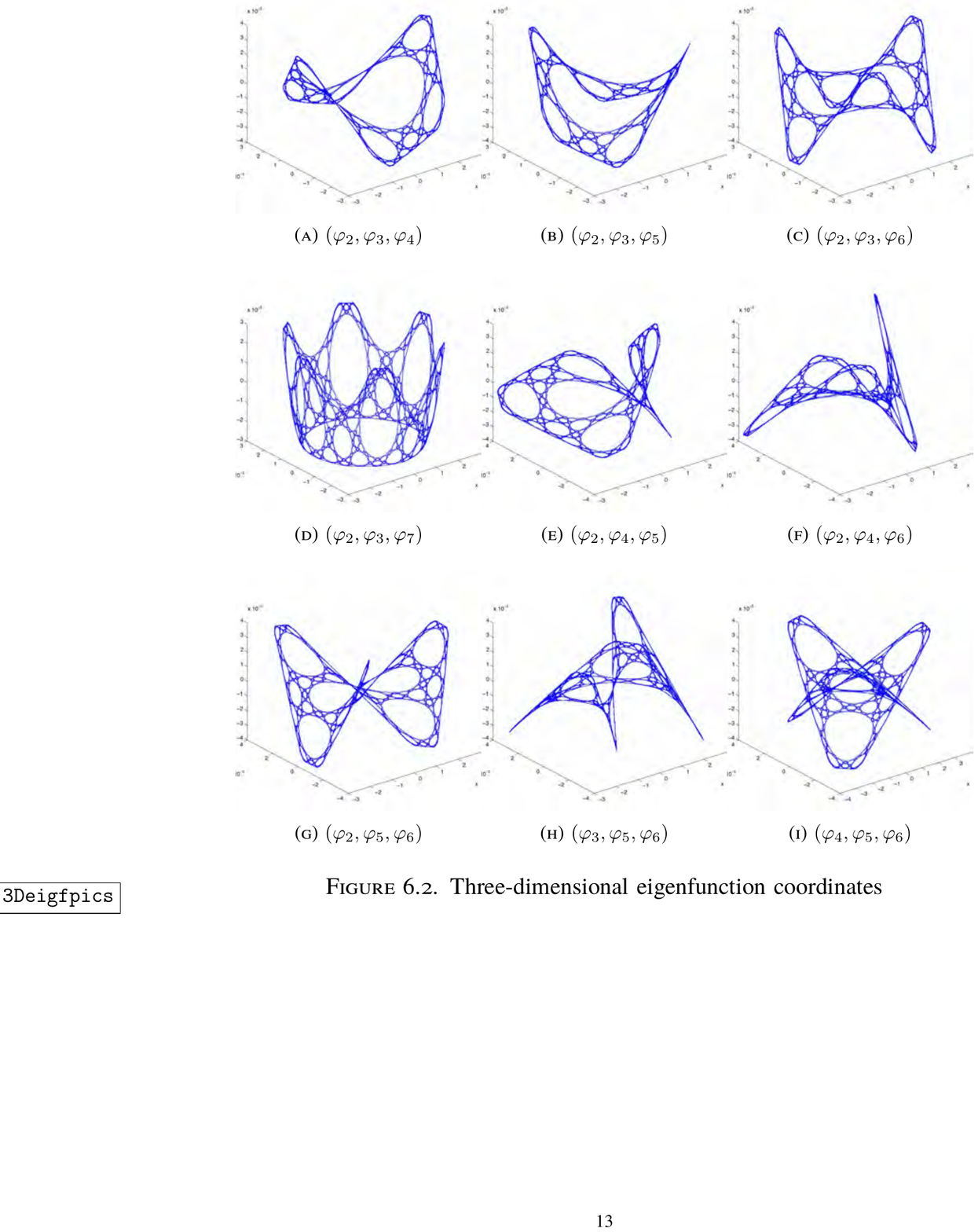}
\end{center}
\caption{Three-dimensional eigenfunction coordinates of the hexacarpet.}\label{3Deigfpics}
\end{figure}

\begin{figure}[!]
\begin{center}
\includegraphics[width=4in,trim=0pt 12pt 0pt 0pt, clip=true]{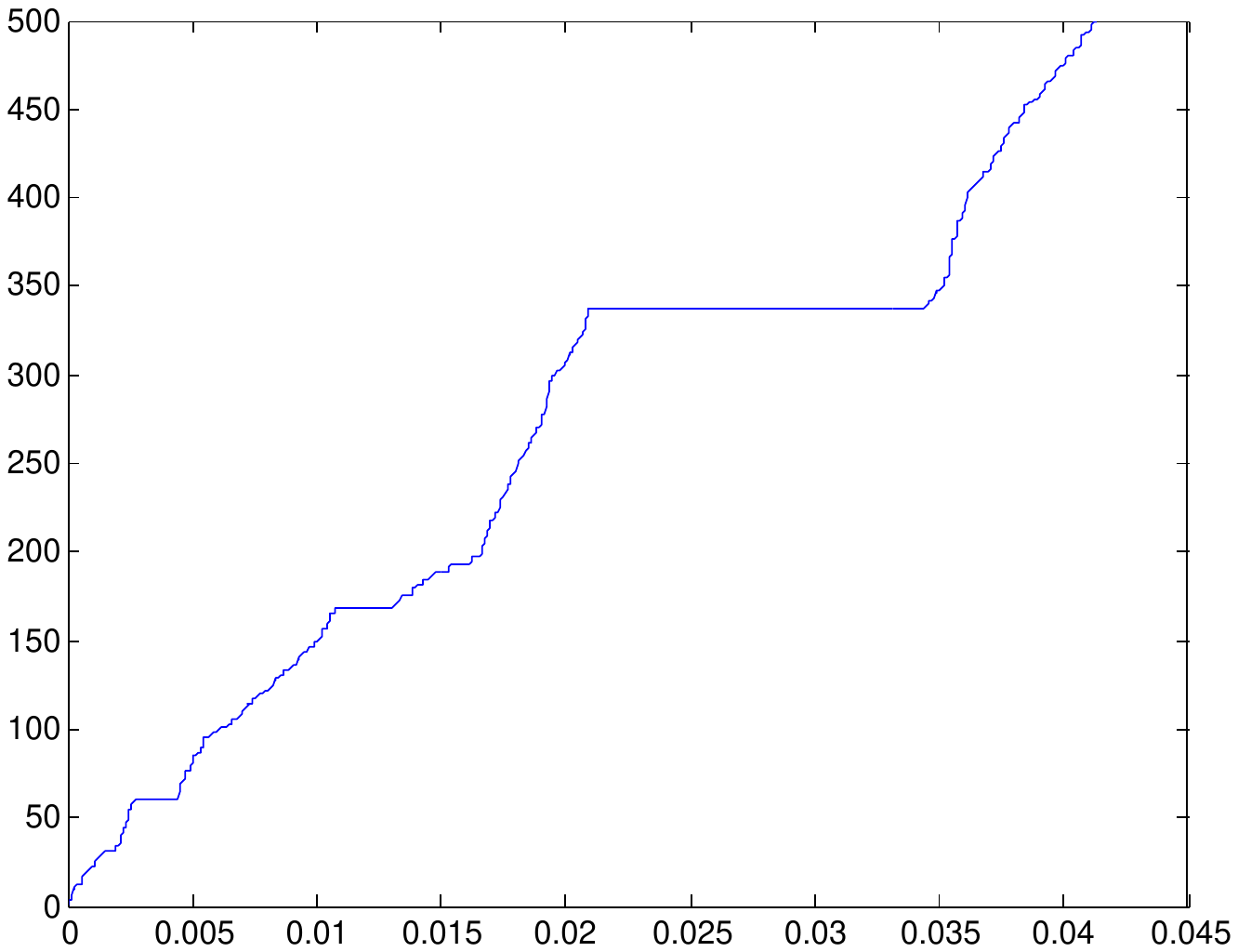}
\end{center}

\

\caption{Shape of the eigenvalue counting function $N(\lambda)$, which is the number of eigenvalues less than $\lambda$, of the 6th level graph for the first 500 eigenvalues. Flat intervals correspond to the gaps in the spectrum. }\label{eigcount}
\end{figure}

We look to solve the eigenvalue on the hexacarpet on $G_n$
\begin{equation}
-\Delta_n u(x)=\lambda u(x)
\end{equation}
at every vertex in $G_n$.  For a finite graph, $G_n$, the graph Laplacian $-\Delta_nu(x)$ is given as 
\begin{equation}
-\Delta_nu(x)=\displaystyle\sum_{\stackrel{x\sim y}{_n} }(u(x)-u(y))
\end{equation}
for every vertex that neighbors $x$ on $G_n$.  Thus for every $6^n$ vertex $x$ in $G_n$ there is a linear equation for $-\Delta_nu(x)$; these equations can be stored into a $6^n$ square matrix.  The eigenvalues and eigenvectors of these graph Laplacian matrices are calculated using the \emph{eigs} command in MATLAB.  Table \ref{eigtable} lists the first twenty eigenvalues for the level-$n$ hexacarpet for $n=7,8$.

\begin{table}[!]
\begin{center}
\scalebox{0.8}{\begin{tabular}{|r||l|l|}
\hline
$\lambda_j$ & $n=7$ & $n=8$ \\ \hline \hline
1&   0.0000   & 0.0000  \\ \hline
2&   1.0000    &1.0000  \\ \hline
3&   1.0000    &1.0000  \\ \hline
4&    3.2798    &3.2798  \\ \hline
5&    3.2798    &3.2798  \\ \hline
6&    5.2033    &5.2032  \\ \hline
7&    7.8389    &7.8386  \\ \hline
8&    7.8389    &7.8386  \\ \hline
9&    8.9141    &8.9139  \\ \hline
10&    8.9141   & 8.9139  \\ \hline
11&    9.4951    &9.4950  \\ \hline
12&    9.4952    &9.4950  \\ \hline
13&   17.5332   &17.5326  \\ \hline
14&   17.5332   &17.5327  \\ \hline
15&   17.6373   &17.6366  \\ \hline
16&   17.6373   &17.6366  \\ \hline
17&   19.8610   &19.8607  \\ \hline
18&   21.7893   &21.7882  \\ \hline
19&   25.7111   &25.7089  \\ \hline
20&   25.7112   &25.7091 \\ \hline
 \end{tabular}} \end{center}\caption{Hexacarpet renormalized eigenvalues at levels $n=7$ and $n=8$.}\label{eigtable}
 \end{table}

\begin{table}[!]
\begin{center}
\scalebox{0.8}{
\begin{tabular}{ | r || l |l |l |l |l |l |l |l  }
\hline
&\multicolumn{7}{|c|}{Level $n$} \\ \hline
$\rho$ & 1 & 2& 3& 4& 5 & 6& 7 \\ \hline \hline  
1& &&&&&&  \\ \hline 
2&    1.2801   &   1.3086   &   1.3085   &   1.3069   &   1.3067   &   1.3065   &   1.3064 \\ \hline
3 &   1.2801   &   1.3086   &   1.3079   &   1.3075   &   1.3066   &   1.3065   &   1.3064 \\ \hline
4 &   1.1761   &   1.3011   &   1.3105   &   1.3064   &   1.3068   &   1.3065   &   1.3065 \\ \hline
5 &   1.1761   &   1.3011   &   1.3089   &   1.3074   &   1.3073   &   1.3065   &   1.3065 \\ \hline
6 &   1.0146   &   1.2732   &   1.3098   &   1.3015   &   1.3067   &   1.3065   &   1.3064 \\ \hline
7      &   &   1.2801   &   1.3114   &   1.3055   &   1.3071   &   1.3066   &   1.3065 \\ \hline
8         &       &   1.2801   &   1.3079   &   1.3086   &   1.3075   &   1.3067   &   1.3065 \\ \hline
9         &       &   1.2542   &   1.3191   &   1.2929   &   1.3056   &   1.3065   &   1.3065 \\ \hline
10      &       &   1.2542   &   1.3017   &   1.3089   &   1.3069   &   1.3066   &   1.3065 \\ \hline
 11      &      &   1.2461   &   1.3051   &   1.3063   &   1.3048   &   1.3065   &   1.3065 \\ \hline
 12       &   &   1.2461   &   1.3019   &   1.3075   &   1.3068   &   1.3066   &   1.3065 \\ \hline
 13      &      &   1.1969   &   1.6014   &   1.0590   &   1.3068   &   1.3066   &   1.3065 \\ \hline
 14      &      &   1.1969   &   1.2972   &   1.3063   &   1.3078   &   1.3066   &   1.3065 \\ \hline
  15&         &   1.2026   &   1.3059   &   1.3020   &   1.3060   &   1.3066   &   1.3065 \\ \hline
 16      &      &   1.2026   &   1.2993   &   1.3074   &   1.3071   &   1.3067   &   1.3065 \\ \hline
 17      &     &   1.1640   &   1.3655   &   1.2349   &   1.3064   &   1.3066   &   1.3065 \\ \hline
 18      &      &   1.1755   &   1.4128   &   1.2009   &   1.3069   &   1.3067   &   1.3065 \\ \hline
 19      &      &   1.1761   &   1.5252   &   1.1171   &   1.3073   &   1.3068   &   1.3066 \\ \hline
 20      &      &   1.1761   &   1.2988   &   1.3114   &   1.3077   &   1.3068   &   1.3065 \\ \hline
\end{tabular}}
\end{center}
\caption{Hexacarpet estimates for resistance coefficient $\rho$ given by $\frac{1}{6} \frac{\lambda_j^n}{\lambda_j^{n+1}}$.}\label{ctable}
\end{table}

Using our data, we are  able to plot the hexacarpet in eigenvalue coordinates.  This is analogous to harmonic coordinates which are used extensively  in the literature on fractals, see \cite{Kajino2012,Ki08,teply} and references therein.  Given two eigenfunctions, $\varphi, \psi$, defined on $G_n$, we plot the ordered pair $(\varphi(x),\psi(x))$ for each $x\in G_n$.  Figure \ref{eigfpics} shows the plots of $(\varphi_i,\varphi_j)$ for $2\leq i< j \leq 6$ for level $n=7$.  The first eigenfunction $\phi_1$ is a constant function at each level and is excluded.  Figure~\ref{3Deigfpics} shows  three dimensional plots $(\varphi_i,\varphi_j, \varphi_k)$ for some choices of $i,j,k$. These plots demonstrate that the geometry of the hexacarpet is significantly different from the geometry of the triangle, on which the hexacarpet construction is based.

In \cite{outer}, it is believed that there is a renormalization factor $\tau$ such that
\begin{equation}
\lambda_j^n=\tau \lambda_j^{n+1}
\end{equation}
where $\lambda_j^n$ represents the $j$th eigenvalue of the level $n$ graph Laplacian.  According to \cite{outer}, this coefficient $\tau$ is the Laplacian scaling factor (it is denoted $R$ in \cite{outer}). This scaling factor  satisfies $\tau=N\rho$, where $N$ is the factor that describes how $X^n$ grows at each level $n$ or,  alternatively, $N$ can also be thought of as the number of contraction maps for a carpet, and $\rho$ is the resistance scaling coefficient (see for instance \cite{Ba,kig01,Ngask} and references therein).  In the case of the hexacarpet, $N=6$ and $\rho$ is not known (in the present state of knowledge it is not possible to compute $\rho$ theoretically for the hexacarpet). 
Since the renormalized ratios of eigenvalues seems to converge, as seen in table \ref{ctable}, we venture Conjecture~\ref{conj-1} part (1) and (2) in the possibility that a Laplacian and its corresponding diffusion process exist on the limit object, the hexacarpet.  The best estimate for $\rho$ will come from our estimates for the lowest nonzero eigenvalue, thus giving us an estimate of $\rho\approx 1.3064$. The hexacarpet has six contraction mappings, and the natural contraction ratio from Section~\ref{sec:bcs} is $1/2$. With $\tau = 6 \rho$, our calculation above suggests that the spectral dimension is $2\log(6)/\log(\tau) \approx 1.74$. Part (3) of Conjecture~\ref{conj-1} comes from the fact that the hexacarpet has well defined reflection symmetries, which make it plausible to apply the methods of \cite{BBK,BBKT}. 
Moreover, we expect logarithmic corrections because the approximating graphs have diameters which seem to grow on the order of $n6^n$ rather than $6^n$, as in Proposition~\ref{prop-5-2} and Conjecture~\ref{conj-5.4}.
There is an extensive literature on heat kernel estimates and their relation to functional inequalities (see 
\cite{BBK,BBKT,GT2001,GT2002,GT2011,Kig-HKE,St-function} and references therein), which provides general framework and background for our calculations and conjectures.

The reasoning for part (4) of the conjecture is illustrated in Figure~\ref{eigcount}, as gaps in the spectrum correspond to flat intervals in the eigenvalue counting function. It is a worthwhile point of investigation because of  \cite{St-gaps,HSTZ}. 
Finally, part (5) of the conjecture is ventured because of recent work on spectral zeta functions \cite{ST} (see \cite{LvF,T-zeta} for background).



\bibliographystyle{amsalpha}
\def\cprime{$'$} \def\cprime{$'$}
\providecommand{\bysame}{\leavevmode\hbox to3em{\hrulefill}\thinspace}
\providecommand{\MR}{\relax\ifhmode\unskip\space\fi MR }
\providecommand{\MRhref}[2]{%
  \href{http://www.ams.org/mathscinet-getitem?mr=#1}{#2}
}
\providecommand{\href}[2]{#2}

%
%
\end{document}